\newcommand{\R}{\mathbb{R}}
\newcommand{\N}{\mathbb{N}}
\newcommand{\ind}{\mathop\mathrm{ind}\nolimits}
\theoremstyle{plain}
\newtheorem{theorem}{Theorem}[section]
\newtheorem{corollary}[theorem]{Corollary}
\newtheorem{lemma}[theorem]{Lemma}
\newtheorem{proposition}[theorem]{Proposition}
\theoremstyle{definition}
\newtheorem{definition}[theorem]{Definition}
\newtheorem{remark}[theorem]{Remark}
\numberwithin{equation}{section}
\newcommand{\dist}{\mathop\mathrm{dist}\nolimits}
\newcommand{\cl}{\colon}
\begin{document}
\title[A continuation result for forced oscillations]%
{A continuation result for forced oscillations of constrained motion problems with infinite delay}
\author[P.\ Benevieri]{Pierluigi Benevieri}
\author[A.\ Calamai]{Alessandro Calamai}
\author[M.\ Furi]{Massimo Furi}
\author[M.P.\ Pera]{Maria Patrizia Pera}

\date{}

\begin{abstract}
We prove a global continuation result for $T$-periodic solutions of a $T$-periodic parametrized second order retarded functional differential equation on a boundaryless compact manifold with nonzero Euler-Poincar\'e characteristic. The approach is based on the fixed point index theory for locally compact maps on ANRs.
As an application, we prove the existence of forced oscillations of
retarded functional motion equations defined on topologically nontrivial compact constraints.
This existence result is obtained under the assumption that the frictional coefficient is nonzero, and we conjecture that it is still true in the frictionless case.
\end{abstract}

\keywords{Retarded functional differential equations, periodic solutions, fixed point index theory, motion problems on manifolds}
\subjclass[2000]{34K13, 37C25, 34C40, 70K42}

\maketitle


\section{Introduction}
\label{Introduction}
\setcounter{equation}{0}

Let $X \subseteq \R\sp s$ be a smooth boundaryless manifold,
and let $F\cl \R \times C((-\infty, 0],X) \to \R\sp s$ be a continuous map
such that
\[
F(t, \varphi) \in T_{\varphi(0)} X \,, \quad \forall \,
(t, \varphi) \in \R \times C((-\infty, 0],X),
\]
where, given $q \in X$, $T_qX \subseteq \R\sp s$ denotes the tangent space of $X$ at $q$.
Any tangent vector field with this property will be called a \emph{functional field} on $X$.

Assume that $F$ is $T$-periodic in the first variable
and consider the following \emph{retarded functional motion equation} on $X$,
depending on a parameter $\lambda \geq 0$:
\begin{equation}
\label{intro-eq-motion}
x_\pi''(t) = 
\lambda \left( F (t,x_t) - \varepsilon x'(t) \right),
\end{equation}
where $x''_\pi(t)$ stands for the tangential part of the acceleration
$x''(t) \in \R\sp{s}$ at the point $x(t) \in X$, $x_t$ is the function $\theta \mapsto x(t+\theta)$, and $\varepsilon \geq 0$ is the frictional coefficient.
Given $\lambda \geq 0$, any $T$-periodic solution of
\eqref{intro-eq-motion} is called a \emph{forced oscillation}
corresponding to the value $\lambda$ of the parameter.
Notice that when $\lambda=0$ the above equation reduces to the so-called \emph{inertial equation} and one obtains the geodesics of $X$ as solutions.

A pair $(\lambda,x)$, with $\lambda \geq 0$ and $x\cl \R \to X$ a forced oscillation of \eqref{intro-eq-motion} corresponding to $\lambda$, is called a 
\emph{$T$-forced pair} of \eqref{intro-eq-motion}.
The set of $T$-forced pairs is regarded as a subset of $[0,+ \infty)\times C\sp{1}_T(X)$, where $C\sp{1}_T(X)$ is
the metric subspace of the Banach space $C\sp{1}_T(\R\sp{s})$ of the
$T$-periodic $C\sp{1}$ maps from $\R$ to $X$.

Given $q \in X$, we denote by $\bar q \in C\sp{1}_T(X)$ the constant
map $t \mapsto q$, and we call \emph{trivial $T$-forced pair} a pair of the form $(0,\bar q)$.
An element $q_0 \in X$ will be called a \emph{bifurcation point} of the
equation \eqref{intro-eq-motion} if every neighborhood of
$(0, \bar q_0)$ in $[0,+ \infty) \times C\sp{1}_T(X)$ contains a nontrivial
$T$-forced pair.
We point out that, in this case, any nontrivial $T$-forced pair 
$(\lambda,x)$ of \eqref{intro-eq-motion}, sufficiently close to 
$(0, \bar q_0)$, must have $\lambda >0$.
This is due to the fact that there are no nontrivial closed geodesics too
close to a given point in a Riemannian manifold.

In this paper we investigate the structure of the set of $T$-forced pairs of \eqref{intro-eq-motion}.
Our main result, Theorem \ref{teo-rami2ord} below, states that
if $X$ is compact with nonzero
Euler--Poincar\'e characteristic, and if $F$ has bounded image and verifies a suitable
Lipschitz-type assumption, then there exists an unbounded connected branch of
nontrivial $T$-forced pairs whose closure intersects the set of the trivial
$T$-forced pairs.

We stress that, when $\varepsilon>0$, the bifurcating branch given in Theorem \ref{teo-rami2ord} is unbounded with respect to the parameter $\lambda$.
Therefore, in this case,
the retarded functional motion equation
\[
x_\pi''(t) = F (t,x_t) - \varepsilon x'(t)
\]
admits at least one forced oscillation (see Corollary \ref{coroll-esist} below).
This consequence of Theorem \ref{teo-rami2ord}
generalizes results given in
\cite{BCFP2} and \cite{BCFP4} for equations with constant time lag
(see also \cite{FuPe90} for the undelayed case).
On the opposite, in the frictionless case, the existence of an unbounded bifurcating branch is not sufficient to guarantee the existence of a forced oscillation of the equation
\begin{equation}
\label{intro-eq}
x_\pi''(t) = F (t,x_t).
\end{equation}

As far as we know, the
problem of the existence of forced oscillations of \eqref{intro-eq} is
still open, even in the undelayed case. An affirmative answer, in the
undelayed situation, regarding the special constraint $X = S \sp 2$ (the
spherical pendulum) can be found in \cite{FuPe91} (see also \cite{FuPe93o} for
the extension to the case $X = S\sp{2n}$).
Let us point out that Theorem \ref{teo-rami2ord} below will be crucial for further investigation of the question whether or not the forced spherical pendulum admits forced oscillations in the retarded functional case.

To get our main result, we consider a first order retarded
functional differential equation (RFDE) on the tangent bundle $TX
\subseteq \R\sp{2s}$, which is equivalent to the second order equation
\eqref{intro-eq-motion}.
More precisely, in the first and preliminary part of the paper we study first order 
parametrized RFDEs of the type
\begin{equation}
\label{intro-eq-RFDE}
x'(t) = f(\lambda, t,x_t),
\end{equation}
where $f\cl [0,+\infty) \times \R \times C((-\infty,0],M) \to \R\sp{k}$ is a parametrized functional field, $T$-periodic in the second variable, on a smooth manifold $M \subseteq \R\sp k$, possibly with boundary.
For this equation we prove a result, Theorem \ref{teo-ipotesilemma} below, 
regarding the existence of a global branch of pairs $(\lambda,x)$, where $x$ is a $T$-periodic solution of \eqref{intro-eq-RFDE} corresponding to the parameter $\lambda$. We point out that in Theorem \ref{teo-ipotesilemma}, $M$ is a manifold with boundary.
This fact will be crucial in the application to second order equations, as it will be clear in the proof of Theorem \ref{teo-rami2ord}.

Among the wide bibliography on RFDEs in Euclidean spaces we refer to the works of Gaines and Mawhin
\cite{GM}, Nussbaum \cite{Nu2, Nu3} and Mallet-Paret, Nussbaum
and Paraskevopoulos \cite{MNP}.
For RFDEs on manifolds we cite the papers of Oliva \cite{Ol1, Ol2}.
For general reference we suggest the monograph by Hale and Verduyn Lunel
\cite{HL}.


\section{Preliminaries}
\label{sect-preliminaries-index}
\setcounter{equation}{0}

\subsection{RFDE}

Let $M$ be an arbitrary subset of $\R\sp k$.
We recall the notions of tangent cone and tangent space of $M$ at a
given point $p$ in the closure $\overline M$ of $M$. The definition of tangent
cone is equivalent to the classical one introduced by Bouligand in \cite{Bou}.

\begin{definition}
\label{tangent cone}
A vector $v \in \R\sp k$ is said to be \emph{inward} to $M$ at
$p \in \overline M$ if there exist two sequences $\{\alpha_n\}$ in
$[0,+ \infty)$ and $\{p_n\}$ in $M$ such that
\[
p_n \to p \quad \mbox{and} \quad \alpha_n (p_n -p) \to v.
\]
The set $C_pM$ of the inward vectors to $M$ at $p$ is called the
\emph{tangent cone} of $M$ at $p$. The \emph{tangent space} $T_pM$ of $M$
at $p$ is the vector subspace of $\R\sp k$ spanned by $C_pM$. A vector $v$
of $\R\sp k$ is said to be \emph{tangent} to $M$ at $p$ if $v \in T_pM$.
\end{definition}

To simplify some statements and definitions we put $C_pM = T_pM = \emptyset$
whenever $p \in \R\sp k$ does not belong to $\overline M$ (this can be regarded as
a consequence of Definition \ref{tangent cone} if one replaces the assumption
$p \in \overline M$ with $p \in \R\sp k$). Observe that $T_pM$ is the trivial
subspace $\{0\}$ of $\R\sp k$ if and only if $p$ is an isolated point of $M$. In
fact, if $p$ is a limit point, then, given any $\{p_n\}$ in $M
\backslash \{p\}$ such that $p_n \to p$, the sequence $\big\{\alpha_n(p_n -
p)\big\}$, with $\alpha_n = 1/\|p_n - p\|$, admits a convergent subsequence
whose limit is a unit vector.

One can show that in the special and important case when $M$ is a
smooth manifold with (possibly empty)
boundary $\partial M$
(a \emph{$\partial$-manifold} for short),
this definition of tangent space is equivalent to the classical one
(see for instance \cite{Mi}, \cite{GuPo}).
Moreover, if $p \in \partial M$,
$C_pM$ is a closed half-space in $T_pM$ (delimited by
$T_p \partial M$), while
$C_pM = T_pM$ if $p \in M \backslash \partial M$. 

\smallskip

Let, as above, $M$ be a subset of $\R\sp k$.
We denote by $D$ a nontrivial closed real interval with $\max D = 0$;
that is, $D$ is either $(-\infty, 0]$ or $[-r, 0]$ with $r>0$.
By $C(D, M)$ we mean the metrizable space of the $M$-valued
continuous functions defined on $D$ with the
topology of the uniform convergence on compact subintervals of $D$.

Given a continuous function $x\cl J \to M$, defined on a real interval
$J$, and given $t \in \R$ such that $t+D \subseteq J$, we adopt the
standard notation $x_t \cl D \to M$ for the function defined by
$x_t (\theta) = x(t + \theta)$.

Let $h\cl \R \times C(D,M) \to \R \sp{k}$ be a
continuous map.
We say that
$h$ is a \emph{functional field on $M$} if 
$h(t,\varphi) \in T_{\varphi(0)}M$ for all $(t,\varphi) \in \R \times C(D,M)$.
In particular, $h$ will be said \emph{inward} (to $M$) if $h(t,\varphi) \in C_{\varphi(0)}M$ for all $(t,\varphi)$.
If $M$ is a closed subset of a boundaryless smooth
manifold $N \subseteq \R\sp k$, we will say that $h$ is \emph{away from $N \backslash M$} if $h(t,\varphi) \not \in C_{\varphi(0)} (N \backslash M)$ for all $(t,\varphi) \in \R \times C(D,M)$.
Notice that this condition is satisfied whenever the point $\varphi(0) \in M$ is not in the topological boundary of $M$ relative to $N$
since, in that case, $C_{\varphi(0)} (N \backslash M) = \emptyset$.

Let us consider a retarded functional differential equation (\textit{RFDE} for short) of the type
\begin{equation}
\label{equ-h}
x'(t) = h(t,x_t),
\end{equation}
where $h\cl \R \times C(D,M) \to \R\sp{k}$ is a functional field on $M$. 

By a \emph{solution} of \eqref{equ-h} we mean a continuous function
$x\cl J \to M$, defined on a real interval $J$ with $\inf J = -\infty$,
which verifies eventually the equality $x'(t) = h(t,x_t)$.
That is, $x$ is a solution of \eqref{equ-h} if there exists
$\bar t$, with $-\infty \leq \bar t < \sup J$,
such that $x$ is $C\sp{1}$ on the subinterval
$(\bar t,\sup J)$ of $J$ and verifies $x'(t) = h(t,x_t)$ for all
$t \in (\bar t,\sup J)$. 

Observe that, when $D = [ -r, 0]$,
there is a one-to-one correspondence between our notion of solution and the classical one which can be found e.g.\ in \cite{HL}
(see also \cite{Ol1}).
The correspondence is the one that assigns to any solution of 
\eqref{equ-h} its restriction to the interval
$[\bar t -r, \sup J)$.

\begin{remark} \label{rem-equiv}
We stress that it is possible to associate to any equation of the form \eqref{equ-h} with $D=[-r,0]$ an equivalent equation of the same type with $D = (-\infty, 0]$.
In other words, given a functional field $h\cl \R \times C([-r,0],M) \to \R\sp{k}$, there exists a functional field $g\cl \R \times C((-\infty, 0],M) \to \R\sp{k}$ such that the equation
\[
x'(t) = g(t,x_t)
\]
has the same set of solutions as \eqref{equ-h}.
To see this, it is enough to define 
$ g\cl \R \times C((-\infty, 0],M) \to \R\sp{k}$
by
\[
 g(t, \varphi) = h(t, \varphi|_{[-r,0]}).
\]
\end{remark}

Because of Remark \ref{rem-equiv}, from now on we will assume $D = (-\infty,0]$. That is, we will focus on RFDE's of the type
\begin{equation}
\label{equ-g-bis}
x'(t) = g(t,x_t),
\end{equation}
where $g\cl \R \times C((-\infty, 0],M) \to \R\sp{k}$ is a functional field on $M$.

\subsection{Initial value problem}

We are now interested in the following initial value problem:
\begin{equation}
\label{pb-g}
\left\{
\begin{array}{ll}
x'(t) = g(t,x_t), & t>0, \\
x(t) = \eta(t), & t \leq 0,
\end{array}
\right.
\end{equation}
where $M$ is a subset of $\R\sp k$,
$g\cl \R \times C((-\infty, 0],M) \to \R\sp{k}$ is a
functional field on $M$, and
$\eta\cl (-\infty, 0] \to M$ is a continuous map.

A \emph{solution} of problem \eqref{pb-g} is a solution $x\cl J \to M$
of \eqref{equ-g-bis} such that $\sup J >0$,
$x'(t) = g(t,x_t)$ for $t>0$,
and $x(t) = \eta (t)$ for $t \leq 0$.

The following technical lemma regards the existence of a persistent solution of
problem \eqref{pb-g}.

\begin{lemma}[\cite{BCFP5}] \label{lemma away from}
Let $M$ be a compact subset of a boundaryless smooth manifold
$N \subseteq \R\sp k$, and $g$ a functional field on $M$ which is away from $N
\backslash M$.
Suppose that $g$ is bounded. Then problem \eqref{pb-g} admits
at least one solution defined on the
whole real line.
\end{lemma}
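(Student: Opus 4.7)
The plan is to reduce to a classical local existence result for RFDEs on the ambient manifold $N$, then to use the ``away from $N\setminus M$'' hypothesis to force solutions to remain in $M$, and finally to exploit the boundedness of $g$ together with the compactness of $M$ to extend the solution to all of $\R$.

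First I would extend $g$ to a bounded continuous functional field on $N$. Since $M$ is compact in the boundaryless smooth manifold $N$, there exist a neighborhood $U$ of $M$ in $N$ and a continuous retraction $r\cl U\to M$. Composing $g$ with the induced map $\varphi\mapsto r\circ\varphi$ on $C((-\infty,0],U)$, and damping outside a slightly smaller tube by a Urysohn-type cutoff while projecting onto $TN$ via the tubular projection, produces a continuous extension $\tilde g\cl \R\times C((-\infty,0],N)\to\R^k$ such that $\tilde g(t,\varphi)\in T_{\varphi(0)}N$ everywhere, $\tilde g=g$ on $\R\times C((-\infty,0],M)$, and $\sup\|\tilde g\|\leq \sup\|g\|$. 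Classical local existence theory for RFDEs with infinite delay on the boundaryless manifold $N$ (see Hale--Verduyn Lunel \cite{HL} and Oliva \cite{Ol1}) then yields a noncontinuable solution $x\cl(-\infty,\omega)\to N$ of $x'(t)=\tilde g(t,x_t)$ with $x(t)=\eta(t)$ for $t\leq 0$.

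The crucial step is to show that $x(t)\in M$ for every $t\in[0,\omega)$. Suppose by contradiction that $E=\{t\in[0,\omega)\cl x(t)\notin M\}$ is nonempty, and set $\tau=\inf E$. Since $x(0)=\eta(0)\in M$, continuity and closedness of $M$ in $N$ give $\tau<\omega$ and $x(\tau)\in M$. By definition of the infimum there is a sequence $t_n\searrow\tau$ with $p_n:=x(t_n)\in N\setminus M$; setting $\alpha_n=1/(t_n-\tau)\geq 0$, the quotients $\alpha_n(p_n-x(\tau))$ converge to the right derivative of $x$ at $\tau$, which equals $\tilde g(\tau,x_\tau)$. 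But $x_\tau$ takes values in $M$, so $\tilde g(\tau,x_\tau)=g(\tau,x_\tau)$, and by Definition \ref{tangent cone} we obtain $g(\tau,x_\tau)\in C_{x(\tau)}(N\setminus M)$, contradicting the assumption that $g$ is away from $N\setminus M$.

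Once $x$ is confined to the compact set $M$ on $[0,\omega)$ and $\|x'(t)\|\leq \sup\|g\|$ there, the usual continuation argument excludes $\omega<+\infty$: otherwise $x$ would extend continuously to $\omega$ with $x(\omega)\in M$, allowing a further local extension that contradicts the maximality of $\omega$. Therefore the solution is defined on all of $\R$. The main obstacle is the invariance step, and its subtlety lies precisely in producing a Bouligand cone element: the choice of $\tau$ as infimum is what guarantees sequences $t_n\searrow\tau$ with $x(t_n)\in N\setminus M$, and the $M$-valuedness of the whole history $x_\tau$ is what permits the replacement of $\tilde g$ by $g$ needed to invoke the hypothesis.
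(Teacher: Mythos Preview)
The paper does not prove this lemma; it is quoted from \cite{BCFP5} with no argument supplied. So there is nothing in the present paper to compare your route against, and your outline---extend $g$ to a functional field on $N$, invoke local existence on $N$, prove invariance of $M$ via the Bouligand cone, then continue globally using boundedness and compactness---is the natural strategy and is essentially what one expects the cited proof to do.

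Your invariance argument is correct and is the substantive part. Taking $\tau=\inf\{t\ge 0: x(t)\notin M\}$, using closedness of $M$ in $N$ to get $x(\tau)\in M$ and $x_\tau\in C((-\infty,0],M)$, and then reading off from $t_n\searrow\tau$, $x(t_n)\in N\setminus M$ that the right derivative $g(\tau,x_\tau)$ lies in $C_{x(\tau)}(N\setminus M)$ is exactly Definition~\ref{tangent cone}, and it contradicts the ``away from'' hypothesis.

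There is, however, a genuine gap in your extension step. You assert that because $M$ is compact in $N$ there is a neighborhood $U$ of $M$ in $N$ and a retraction $r\cl U\to M$. This is false in general: Lemma~\ref{lemma away from} assumes only that $M$ is a compact subset of $N$, not that $M$ is an ANR or a $\partial$-manifold (that additional hypothesis appears only later in the paper). A Cantor set in $N=\R$ already fails to be a neighborhood retract. The repair is routine: since $M$ is closed in $N$, the set $C((-\infty,0],M)$ is closed in the metrizable space $C((-\infty,0],N)$, so Dugundji's extension theorem yields a continuous bounded extension $\bar g\cl \R\times C((-\infty,0],N)\to\R^k$ of $g$; post-composing with the fiberwise orthogonal projection $\R^k\to T_{\varphi(0)}N$ (which fixes $T_{\varphi(0)}M$) gives the desired tangent functional field $\tilde g$ on $N$. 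With this correction your argument goes through.
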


{}From now on $M$ will be a compact $\partial$-manifold in $\R\sp k$. In this case
one may regard $M$ as a subset of a smooth boundaryless manifold $N$ of the
same dimension as $M$ (see e.g.\ \cite{Hi}, \cite{Mu}). It is not hard to
show that a functional field $g$ on $M$ is away from the complement $N \backslash
M$ of $M$ if and only if it is \emph{strictly inward}; meaning that $g$ is inward and
$g(t, \varphi) \not \in T_{\varphi(0)} \partial M$ for all $(t,\varphi) \in \R \times C((-\infty, 0], M)$ such that $\varphi (0) \in \partial M$.

\smallskip

We will consider the following assumption:
\begin{itemize}
\item[(H)] \label{acca} Given any compact subset $Q$ of $\R \times C((-\infty, 0], M)$, there exists $L \geq 0$ such that 
\[
\| g (t, \varphi) - g (t, \psi) \| \leq L \sup_{s\leq 0} \| \varphi (s) - \psi (s) \|
\]
for all $(t, \varphi), (t, \psi) \in Q$.
\end{itemize}

The following proposition regards existence and uniqueness of solutions of problem \eqref{pb-g} in the case when $g$
is inward, bounded, and verifies (H).
For the proof we refer to \cite{BCFP5}, in which the proposition is proved under a weaker assumption that (H).

\begin{proposition} \label{prop-esiuni}
Let $M \subseteq \R\sp k$ be a compact $\partial$-manifold and
$g$ an inward functional field on $M$.
Suppose that $g$ is bounded.
Then, problem \eqref{pb-g} admits a solution
defined on the whole real line.
Moreover, if $g$ verifies (H),
then the solution is unique. 
\end{proposition}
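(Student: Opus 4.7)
The natural idea for existence is to perturb $g$ into a strictly inward functional field, so that Lemma \ref{lemma away from} applies. Since $M$ is a compact $\partial$-manifold, a standard construction using a collar of $\partial M$ and a partition of unity produces a bounded continuous vector field $\xi\cl M \to \R^k$ that is strictly inward at every boundary point and tangent to $M$ everywhere. For each integer $n \geq 1$, set
\[
g_n(t,\varphi) = g(t,\varphi) + \tfrac{1}{n}\,\xi(\varphi(0)).
\]
Each $g_n$ is a bounded continuous functional field on $M$, and is strictly inward because the sum of an inward vector and a strictly inward vector is strictly inward. Identifying $M$ with a subset of a boundaryless manifold $N$ of the same dimension, the observation made after Lemma \ref{lemma away from} shows that $g_n$ is away from $N\setminus M$, so that lemma yields a solution $x_n\cl \R \to M$ of the initial value problem obtained from \eqref{pb-g} by replacing $g$ with $g_n$.

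Compactness of $M$ and the uniform bound $\|g_n\| \leq \|g\|_\infty + \|\xi\|_\infty$ make the family $\{x_n\}$ equibounded and equi-Lipschitz on $[0,+\infty)$; on $(-\infty,0]$ each $x_n$ equals $\eta$. Ascoli--Arzel\`a then provides a subsequence converging uniformly on compact subsets of $\R$ to a continuous $x\cl \R \to M$ with $x = \eta$ on $(-\infty,0]$. For each $t>0$, the uniform convergence $x_n \to x$ on $[-r,t]$ for every $r>0$ translates into $(x_n)_s \to x_s$ in $C((-\infty,0],M)$ uniformly in $s\in[0,t]$. Passing to the limit in
\[
x_n(t) = \eta(0) + \int_0^t g_n(s,(x_n)_s)\,ds
\]
and using the continuity of $g$ gives $x(t) = \eta(0) + \int_0^t g(s,x_s)\,ds$, so $x$ solves \eqref{pb-g} on the whole real line.

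For uniqueness under (H), let $x$ and $y$ be two solutions, so $x=y=\eta$ on $(-\infty,0]$. Fix $T>0$ and set
\[
Q = \{(s,x_s):s\in[0,T]\}\cup\{(s,y_s):s\in[0,T]\},
\]
which is the union of two continuous images of $[0,T]$, hence compact. Let $L$ be the constant provided by (H) for this $Q$. Since $x_s-y_s$ vanishes on $(-\infty,-s]$ for $s\in[0,T]$, putting $u(t)=\sup_{0\leq\sigma\leq t}\|x(\sigma)-y(\sigma)\|$ yields
\[
\|x(t)-y(t)\| \leq \int_0^t \|g(s,x_s)-g(s,y_s)\|\,ds \leq L\int_0^t u(s)\,ds
\]
for $t\in[0,T]$, so $u(t)\leq L\int_0^t u(s)\,ds$. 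Gronwall's inequality forces $u\equiv 0$ on $[0,T]$, and since $T$ is arbitrary we conclude $x\equiv y$.

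The main delicate point, once the perturbation scheme is in place, is the passage to the limit in the nonlocal term $g_n(s,(x_n)_s)$ as $s$ varies in a bounded interval. This is handled by the uniform local convergence described above, which exploits the fact that the segments $(x_n)_s$ all coincide with $\eta$ on $(-\infty,-s]$ and are uniformly Lipschitz on the complementary part $[-s,0]$; constructing the strictly inward auxiliary field $\xi$ is also crucial, though standard.
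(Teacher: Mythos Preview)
Your argument is correct, but there is nothing in the paper to compare it against: the paper does not prove Proposition~\ref{prop-esiuni} at all, deferring instead to \cite{BCFP5} (where, it notes, the result is obtained under a hypothesis weaker than (H)). What you have written is therefore a self-contained proof rather than a reconstruction of the authors'.

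Your strategy---perturb $g$ by $\tfrac{1}{n}\xi(\varphi(0))$ with $\xi$ a strictly inward tangent field so that Lemma~\ref{lemma away from} applies, extract a limit via Ascoli--Arzel\`a and dominated convergence, then run Gronwall under (H) for uniqueness---is the natural one and each step is sound. Two minor remarks: for the limit in the integral you only need pointwise convergence $g_n(s,(x_n)_s)\to g(s,x_s)$ for each $s$ together with the uniform bound, so the stronger uniform-in-$s$ claim, while true, is not needed; and in the uniqueness part you implicitly use that $s\mapsto x_s$ is continuous from $[0,T]$ into $C((-\infty,0],M)$ with the compact-open topology, which is routine but worth stating since it is what makes $Q$ compact.
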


\subsection{Fixed point index}
Here we summarize the main properties of the fixed
point index in the context of 
absolute neighborhood retracts
(ANRs). 
Let $X$ be a metric
ANR and consider a locally compact (continuous) $X$-valued map $k$
defined on a subset $\mathcal D(k)$ of $X$. Given an open subset $U$
of $X$ contained in $\mathcal D(k)$, if the set of fixed points of
$k$ in $U$ is compact, the pair $(k,U)$ is called
\emph{admissible}. It is known that to any admissible pair $(k,U)$ we
can associate an integer $\ind_X(k,U)$ \mbox{--\;the} \emph{fixed point index}
of $k$ in \mbox{$U$\,--} which satisfies properties analogous to those of the
classical Leray--Schauder degree \cite{LS}. The reader can see for
instance \cite{Br}, \cite{Gra}, \cite{Nu1} or \cite{Nu3} for a
comprehensive presentation of the index theory for ANRs. As regards
the connection with the homology theory we refer to standard algebraic topology textbooks (e.g.\ \cite{Do}, \cite{Sp}).

We summarize for the reader's convenience the main properties of the index.
\begin{itemize}
\item[i)] ({\it Existence}) If $\ind_X(k,U)\neq 0$, then $k$ admits at
 least one fixed point in $U$.
\item[ii)] ({\it Normalization}) If $X$ is compact, then
 $\ind_X(k,X) = \Lambda (k)$, where $\Lambda (k)$ denotes the Lefschetz
 number of $k$.
\item[iii)] ({\it Additivity}) Given two disjoint open subsets $U_1$,
 $U_2$ of $U$ such that any fixed point of $k $ in $U$ is contained
 in $U_1\cup U_2$, then $\ind_X(k,U) = \ind_X(k,U_1)+ \ind_X(k,U_2)$.
\item[iv)] ({\it Excision}) Given an open subset $U_1$ of $U$ such
 that $k$ has no fixed points in $ U \backslash U_1$, then
 $\ind_X(k,U) = \ind_X(k,U_1)$.
\item[v)] ({\it Commutativity}) Let $X$ and $Y$ be metric
 ANRs. Suppose that $U$ and $V$ are open subsets of $X$ and $Y$
 respectively and that $k\cl U \to Y$ and $h\cl V \to X$ are locally compact
 maps. Assume that one of the sets of fixed points of $hk$ in $k\sp {-1}(V)$
 or $kh$ in $h\sp {-1}(U)$ is compact. Then the other set is compact as well
and $\ind_X(hk, k\sp {-1}(V)) = \ind_Y(kh, h\sp {-1}(U))$.
\item[vi)] ({\it Generalized homotopy invariance}) Let $I$ be a
 compact real interval and $\Omega$ an open subset of $X \times
 I$. For any $\lambda \in I$, denote $\Omega_\lambda = \{x \in X:
 (x,\lambda) \in \Omega\}$. Let $H\cl \Omega \to X$ be a locally compact
 map such that the set $\{(x,\lambda) \in \Omega: H(x,\lambda) = x\}$ is
 compact.
 Then $\ind_X(H(\cdot,\lambda),\Omega_\lambda)$ is independent of $\lambda$.
\end{itemize}


\section{Continuation results for first order equations}
\label{sect-continuation-1ord}
\setcounter{equation}{0}

Let $M$ be a compact $\partial$-manifold in $\R\sp k$.
Consider the RFDE
\begin{equation}
\label{equ-RFDE}
x'(t) = f(\lambda, t,x_t),
\end{equation}
where $f\cl [0,+\infty) \times \R \times C((-\infty,0],M) \to \R\sp{k}$ is an inward functional field on $M$
depending on a parameter $\lambda \in [0,+\infty)$, which is $T$-periodic in the second variable ($T$ being a positive real number). 

Suppose that $f$ is bounded on any set 
$[0, \bar \lambda] \times \R \times C((-\infty,0],M)$, $\bar \lambda>0$,
and that 
\begin{equation}
\label{equ-lambdazero}
f(0,t,\varphi) = 0 \quad \mbox{ for any }
(t, \varphi) \in \R \times C((-\infty,0],M).
\end{equation}
Moreover, assume that $f$ verifies the following assumption
(compare with assumption (H) on page \pageref{acca}):
\begin{itemize}
\item[(\~ H)] Given any $\bar \lambda >0$ and any compact subset $Q$ of $\R \times C((-\infty, 0], M)$, there exists $L \geq 0$ such that 
\[
\| f (\lambda, t, \varphi) - f (\lambda, t, \psi) \| \leq L \sup_{s\leq 0} \| \varphi (s) - \psi (s) \|
\]
for all $\lambda \in [0,\bar \lambda]$ and all $(t, \varphi), (t, \psi) \in Q$.
\end{itemize}

We will now prove a global continuation result (Theorem \ref{teo-ipotesilemma} below) for the equation \eqref{equ-RFDE} in the case when the Euler--Poincar\'e characteristic of $M$ is nonzero.
The results of this section, in particular Theorem \ref{teo-ipotesilemma}, will play a crucial role in the proof of Theorem \ref{teo-rami2ord} below.

In the sequel we will adopt the following notation.
By $C([-T,0],M)$ we shall mean the (complete) metric space of the continuous functions $\psi\cl [-T,0] \to M$ endowed with the metric induced by
the Banach space $C([-T,0],\R\sp{k})$.
We shall denote by $C_T(M)$ the set of the continuous $T$-periodic maps from $\R$ to $M$ with the metric induced by the Banach space $C_T(\R\sp{k} )$ of the continuous $T$-periodic $\R\sp{k} $-valued maps (with the standard supremum norm).

We will say that $(\lambda,x) \in[0,+ \infty)\times C_T(M)$
is a \emph{$T$-periodic pair} of \eqref{equ-RFDE} if
$x\cl \R \to M$ is a $T$-periodic solution of \eqref{equ-RFDE} corresponding
to $\lambda$. A $T$-periodic pair of the type $(0, x)$ is said to be
\emph{trivial}. In this case, because of assumption \eqref{equ-lambdazero},
the function $x$ is constant. 

A pair $(\lambda,\psi) \in [0,+ \infty)\times C([-T, 0],M)$ will be
called a \emph{source pair} (of \eqref{equ-RFDE}) if there exists
$x \in C_T(M)$ such that $x(t) = \psi (t)$ for all $t \in [-T, 0]$
and $(\lambda,x)$ is a $T$-periodic pair.
A source pair of the type $(0, \psi)$ will be called \emph{trivial}.
In this case the map $\psi$ is a constant $M$-valued map, since it is the restriction of a constant map defined on $\R$.

Observe that the map $\Theta\cl(\lambda,x)\mapsto (\lambda,\psi)$ which
associates to a $T$\mbox{-}periodic pair $(\lambda,x)$ the
corresponding source pair $(\lambda,\psi)$ is continuous,
$\psi$ being the restriction of $x$ to the interval $[-T,0]$.
Actually, $\Theta$ is an isometry 
between the set 
$\Sigma \subseteq [0,+ \infty) \times C_T(M)$
of the $T$-periodic pairs and the set 
$S \subseteq [0,+ \infty) \times C([-T, 0],M)$
of the source pairs.

It is not difficult to see that $\Sigma$ is a closed subset of the complete metric space $[0,+\infty) \times C_T(M)$.
Thus, because of Ascoli's Theorem, $\Sigma$ is a locally compact space, and this fact will turn out to be useful in the sequel.

Our first step is to define a Poincar\'e-type $T$-translation operator $P_{\lambda}$ on $C([-T, 0],M)$ whose fixed points are the restrictions to the interval $[-T,0]$ of the $T$-periodic solutions of \eqref{equ-RFDE}.
For this purpose, given $\psi \in C([-T, 0],M)$, we construct a suitable backward continuous extension $\widehat \psi$ of $\psi$ (see Proposition \ref{proposition-extension} below).
In this way, given $\lambda \geq 0$ and $t \in \R$, $f(\lambda, t, \widehat \psi)$ turns out to be a well defined vector in the tangent space $T_{\psi(0)}M$.
We point out that our operator $P_{\lambda}$ is different from the one in our paper \cite{BCFP5}, in which the assertion of Proposition 3.1 about $P_{\lambda}$ is imprecise.
In fact, the operator $P_{\lambda}$ considered in \cite[Proposition 3.1]{BCFP5} could admit fixed points which do not correspond to $T$-periodic solutions, contrary to what stated.
That assertion, however, holds true for the operator $P_{\lambda}$ defined below (see Proposition \ref{prop-fixed}).
Consequently, all the results in \cite{BCFP5} are correct, Proposition~3.1 apart.

In what follows, by a $T$-periodic map defined on $(-\infty, 0]$ we mean the restriction of a $T$-periodic map on $\R$.

\begin{proposition} \label{proposition-extension}
There exists a continuous map from $C([-T, 0],M)$ to $C((-\infty, 0],M)$, 
$\psi\mapsto\widehat\psi$, with the following properties:
\begin{itemize}
\item[{1.}] The restriction of $\widehat\psi$ to $[-T,0]$ coincides with $\psi$.
\item[{2.}] If $\psi(-T)=\psi(0)$, then $\widehat\psi$ is $T$-periodic.
\end{itemize}
\end{proposition}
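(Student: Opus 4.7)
The plan is to extend $\psi\colon[-T,0]\to M$ to $(-\infty,0]$ in two stages: first produce a continuous map $\tilde\psi\colon(-\infty,0]\to\R^k$ taking values in a tubular neighborhood $U\supset M$ and satisfying the required properties, then project $\tilde\psi$ into $M$ via a retraction $\rho\colon U\to M$. Such $U$ and $\rho$ exist because $M$ is a compact $\partial$-manifold, hence a neighborhood retract in any smooth boundaryless manifold $N\subseteq\R^k$ of the same dimension containing $M$ (as already noted in Section~2). Set $\widehat\psi=\rho\circ\tilde\psi$; then properties (1) and (2) follow from the corresponding properties of $\tilde\psi$, because $\rho$ is the identity on $M$ and $\tilde\psi$ already lies in $M$ both on $[-T,0]$ and, when $\psi(-T)=\psi(0)$, throughout $(-\infty,0]$.

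For $\tilde\psi$ itself, set $\tilde\psi(t)=\psi(t)$ on $[-T,0]$. On $(-\infty,-T]$ I would interpolate, via the convex combination
\[
\tilde\psi(t) = \chi_\psi(t)\,P\psi(t) + \bigl(1-\chi_\psi(t)\bigr)\,\psi(-T),
\]
between the ``naive $T$-periodic extension'' $P\psi$ (continuous except for jumps of size $\psi(0)-\psi(-T)$ at each $-nT$, $n\ge 1$) and the constant extension $\equiv\psi(-T)$. Here $\chi_\psi\colon(-\infty,-T]\to[0,1]$ is a continuous $\psi$-dependent cutoff, equal to $1$ away from narrow windows around each $-nT$ and dropping to $0$ at every $-nT$, so that the combination has no jumps. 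The window widths are arranged to depend continuously on $\psi$ and to vanish precisely on $A=\{\psi\colon\psi(-T)=\psi(0)\}$. With this choice, for $\psi\in A$ the cutoff is (essentially) constant $1$ and $\tilde\psi=P\psi$ coincides with the genuine $T$-periodic extension, giving property (2); property (1) holds by construction.

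Two technical obstacles must be addressed. First, $\tilde\psi$ must lie in $U$ uniformly in $\psi$ so that $\rho$ can be applied, which is subtle because the line segment in $\R^k$ from $P\psi(t)$ to $\psi(-T)$ can have length up to $\mathrm{diam}(M)$ and need not lie near $M$. One workaround is to replace the anchor $\psi(-T)$ in the convex combination by a value of $P\psi$ taken just outside the window, so that the two endpoints of the segment are close together in $M$. Second, one must verify uniform-on-compacts continuity of $\psi\mapsto\tilde\psi$ at points $\psi_0\in A$ where the window widths shrink to zero; the key observation is that the remainder $(\chi_\psi-\chi_{\psi_0})(P\psi_0-\psi_0(-T))$ tends to zero uniformly on compacts because, for $\psi_0\in A$, $P\psi_0(-nT)=\psi_0(0)=\psi_0(-T)$, so the factor $P\psi_0-\psi_0(-T)$ is bounded by the modulus of continuity of $P\psi_0$ evaluated at the shrinking window scale.
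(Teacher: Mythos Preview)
Your outline---build an $\R^k$-valued extension $\tilde\psi$ landing in a neighborhood $U$ of $M$ and then retract---is exactly the paper's strategy too, and properties~1 and~2 would indeed follow from the corresponding properties of $\tilde\psi$. The gap is in the construction of $\tilde\psi$ itself: the ``first technical obstacle'' you flag is real, and the workaround you propose does not resolve it.

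The difficulty is this. At each $-nT$ the periodic extension $P\psi$ jumps from $\psi(0)$ (left limit) to $\psi(-T)$ (right limit). Any continuous $\tilde\psi$ must therefore carry a path joining $\psi(0)$ to $\psi(-T)$, and your convex-combination formula makes that path the straight segment in $\R^k$. But $\psi(0)$ and $\psi(-T)$ can be antipodal on $M$, and the chord can leave $U$ entirely (take $M=S^1\subset\R^2$, $U$ a thin annulus, $\psi(-T)=(1,0)$, $\psi(0)=(-1,0)$: the segment passes through the origin). Your fix---anchoring to ``a value of $P\psi$ just outside the window''---does not help: if you use the \emph{same} nearby anchor on both sides of $-nT$, one side still joins $\approx\psi(0)$ to $\approx\psi(-T)$ by a long chord; if you use \emph{different} anchors on the two sides (left edge on the left, right edge on the right), each half-segment is short, but the two halves meet at $-nT$ with values $\approx\psi(0)$ and $\approx\psi(-T)$ respectively, and $\tilde\psi$ is discontinuous there. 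So some genuinely new idea is needed to bridge the jump inside $U$.

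The paper avoids the issue by abandoning spatial interpolation altogether in favor of a \emph{time reparametrization}. It first forms a continuous $\R^k$-valued curve $\psi^*$ by adding to $P\psi$ the unique piecewise-constant correction that removes the jumps (so $\psi^*$ may drift far from $M$ but is continuous and depends continuously on $\psi$). Then, with $\sigma\colon\R^k\to[0,1]$ a Urysohn function equal to $1$ on $M$ and $0$ off $U$, it sets
\[
v(\psi,t)=\min_{t\le s\le 0}\sigma(\psi^*(s)),\qquad \widehat\psi(t)=\rho\bigl(\psi^*(v(\psi,t)\,t)\bigr).
\]
The point is that if $\psi^*(v(\psi,t)t)$ ever lay outside $U$, then $\sigma$ would vanish somewhere on $[t,0]$, forcing $v(\psi,t)=0$ and hence $\psi^*(v(\psi,t)t)=\psi^*(0)=\psi(0)\in M$, a contradiction. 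Thus the reparametrized curve never leaves $U$, without any smallness assumption on $\|\psi(0)-\psi(-T)\|$; properties~1 and~2 then follow because on $[-T,0]$ one has $\psi^*=\psi\in M$, so $\sigma\equiv 1$, $v\equiv 1$, and $\widehat\psi=\rho\circ\psi=\psi$, while if $\psi(-T)=\psi(0)$ then $\psi^*=P\psi$ is $T$-periodic and $M$-valued, whence $v\equiv 1$ again.
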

\begin{proof}
Given $\psi \cl [-T,0] \to M$ continuous, consider the $T$-periodic backward extension $\psi\sp{\sim} \cl (-\infty,0] \to M$ of the restriction of $\psi$ to $(-T,0]$. This function is not continuous, unless $\psi(-T) = \psi(0)$. However, there exists (and is unique) a piecewise constant function $\psi\sp{-} \cl (-\infty,0] \to \R\sp{k}$, which is zero on $(-T,0]$, and which makes $\psi\sp{*} := \psi\sp{-} + \psi\sp{\sim}$ continuous. Notice that this new function depends continuously on $\psi$, with the topology on $C((-\infty, 0],\R\sp{k})$ of the uniform convergence on compact subintervals of $(-\infty,0]$. However, $\psi\sp{*}$ is not necessarily $M$-valued, unless $\psi(-T) = \psi(0)$. Therefore, we are going to define $\widehat \psi \in C((-\infty, 0],M)$ by ``pushing $\psi\sp{*}$ down on $M$''.

Since $M$ is a closed ANR in $\R\sp{k}$, there exists a retraction $\rho \cl U \to M$ defined on an open neighborhood $U \subseteq \R\sp{k}$ of $M$.
As $M \not= \R\sp{k}$, we may assume $\R\sp k \setminus U \not= \emptyset$.
Let $\sigma \cl \R\sp k \to [0,1]$ be the Urysohn function
\[
\sigma(x) = \frac{\dist(x, \R\sp k \setminus U)}{\dist(x, \R\sp k \setminus U)+\dist(x, M)}.
\]
On the metric space $C([-T, 0],M) \times (-\infty,0]$ consider the real function
\[
v(\psi,t) = \min \big\{\sigma(\psi\sp{*}(s)): t \leq s \leq 0 \big\}.
\]
This is clearly continuous.
We claim that the curve $t \mapsto \psi\sp{*}(v(\psi,t)t)$ is entirely contained in the open set $U$. 
In fact, given $t \in (-\infty,0]$, assume $\psi\sp{*}(v(\psi,t)t) \not\in U$.
Then $\sigma(\psi\sp{*}(s)) = 0$ for $s = v(\psi,t)t \in [t,0]$.
This implies $v(\psi,t) = 0$.
Hence $\psi\sp{*}(v(\psi,t)t) = \psi\sp{*}(0) = \psi(0) \in M$, a contradiction.
Thus, it makes sense to define $\widehat \psi \in C((-\infty, 0],M)$ by $\widehat \psi(t) = \rho(\psi\sp{*}(v(\psi,t)t))$. One can check that the map $\psi\mapsto\widehat\psi$ verifies properties $1$ and $2$.
\end{proof}

We are ready to define our operator $P_{\lambda}$ acting on the space $C([-T,0],M)$. For simplicity, from now on, this space will be denoted by $\widetilde M$.
Since $M$ is an ANR, it is not difficult to show (see e.g.\ \cite{EF}) that $\widetilde M$ is an ANR as well.

Let $\lambda \in [0,+ \infty)$ and define
\[
P_\lambda\cl \widetilde M \to \widetilde M
\]
by $P_\lambda (\psi)(s) = x(\lambda, \widehat \psi, s+T)$, where, given $\eta \in C((-\infty, 0],M)$, $t \mapsto x(\lambda,\eta,t)$ is the unique solution, ensured by Proposition \ref{prop-esiuni}, of the initial value problem
\begin{equation}
\label{pb-lf}
\left\{
\begin{array}{ll}
x'(t) = f(\lambda, t, x_t), & t > 0, \\
x(t) = \eta(t), & t \leq 0.
\end{array}
\right.
\end{equation}
The following two propositions regard some crucial properties of $P_\lambda$.

\begin{proposition} \label{prop-fixed}
The fixed points of $P_\lambda$ correspond to the $T$-periodic solutions of equation \eqref{equ-RFDE} in the following sense: $\psi$ is a fixed point of $P_\lambda$ if and only if it is the restriction to $[-T,0]$ of a $T$-periodic solution.
\end{proposition}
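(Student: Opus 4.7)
The plan is to prove the two implications separately, with the $(\Rightarrow)$ direction doing the real work via a symmetry/uniqueness argument.

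For the $(\Leftarrow)$ direction, I would start from a $T$-periodic solution $y\cl\R\to M$ of \eqref{equ-RFDE} and set $\psi = y|_{[-T,0]}$. Periodicity gives $\psi(-T)=y(-T)=y(0)=\psi(0)$, so by property 2 of Proposition \ref{proposition-extension} the extension $\widehat\psi$ is $T$-periodic. Since $\widehat\psi$ and $y|_{(-\infty,0]}$ are both $T$-periodic and agree on $[-T,0]$, they coincide on $(-\infty,0]$. Thus $y$ itself solves the initial value problem \eqref{pb-lf} with datum $\eta=\widehat\psi$, and uniqueness (Proposition \ref{prop-esiuni}, using hypothesis (\~H)) forces $y(t)=x(\lambda,\widehat\psi,t)$ for all $t$. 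Then $P_\lambda(\psi)(s)=x(\lambda,\widehat\psi,s+T)=y(s+T)=y(s)=\psi(s)$.

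For the $(\Rightarrow)$ direction, assume $P_\lambda(\psi)=\psi$ and set $y(t)=x(\lambda,\widehat\psi,t)$. Evaluating the fixed point equation at $s=-T$ gives $\psi(-T)=y(0)=\widehat\psi(0)=\psi(0)$, so Proposition \ref{proposition-extension}(2) again makes $\widehat\psi$ a $T$-periodic map. Now I would define $z(t)=y(t+T)$ and verify that $z$ solves the same IVP as $y$. On $(-\infty,-T]$, $T$-periodicity of $\widehat\psi$ yields $z(t)=\widehat\psi(t+T)=\widehat\psi(t)$; on $(-T,0]$, the fixed point condition rewritten as $y(s+T)=\psi(s)=\widehat\psi(s)$ gives $z(t)=\widehat\psi(t)$. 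For $t>0$, $T$-periodicity of $f$ in its second variable together with the identity $y_{t+T}=z_t$ yields $z'(t)=y'(t+T)=f(\lambda,t+T,y_{t+T})=f(\lambda,t,z_t)$. So $z$ solves \eqref{pb-lf} with the same datum $\widehat\psi$, and uniqueness forces $z=y$, i.e.\ $y$ is $T$-periodic. Periodicity then upgrades eventual $C^{1}$-regularity of $y$ on $(0,\infty)$ to $C^{1}$-regularity on all of $\R$ and extends the equation $y'(t)=f(\lambda,t,y_t)$ from $(0,\infty)$ to $\R$, exhibiting $y\in C_T(M)$ as a $T$-periodic solution of \eqref{equ-RFDE} with $y|_{[-T,0]}=\psi$.

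The one subtlety worth flagging is the compatibility check on $(-\infty,0]$ in the second direction: it requires both ingredients simultaneously, namely that the fixed point equation supplies $\psi(-T)=\psi(0)$ (so that $\widehat\psi$ is $T$-periodic and handles $(-\infty,-T]$) and that the same equation directly supplies the matching $y(t+T)=\widehat\psi(t)$ on $(-T,0]$. Once this is in place, the rest is a clean uniqueness argument supported by $T$-periodicity of $f$ in its second variable.
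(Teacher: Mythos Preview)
Your proof is correct and follows essentially the same approach as the paper's own proof: both directions hinge on showing $\psi(-T)=\psi(0)$ so that $\widehat\psi$ is $T$-periodic, then invoking uniqueness for problem \eqref{pb-lf} (Proposition \ref{prop-esiuni}) to identify $y$ with its $T$-shift. Your write-up is in fact more explicit than the paper's in verifying that $z(t)=y(t+T)$ satisfies the same initial value problem (splitting the initial-data check into $(-\infty,-T]$ and $(-T,0]$ and spelling out $y_{t+T}=z_t$), but the underlying argument is identical.
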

\begin{proof}
(if) Assume that $t \mapsto y(t)$ is a $T$-periodic solution of \eqref{equ-RFDE} and denote by $\psi$ its restriction to the interval $[-T,0]$.
Proposition \ref{proposition-extension} yields $y(t) = \widehat{\psi}(t)$ for $t \leq 0$. Thus, because of the uniqueness of the solution of the initial problem \eqref{pb-lf}, one gets $y(t) = x(\lambda,\widehat{\psi},t)$ or, equivalently, $y(t+T) = x(\lambda,\widehat{\psi},t+T)$, for all $t \in \R$.
In particular, for $-T \leq t \leq 0$, we obtain $\psi(t) = y(t) = y(t+T) = x(\lambda,\widehat{\psi},t+T) = P_\lambda(\psi)(t)$. That is, $\psi = P_\lambda(\psi)$.

(only if) Let $\psi$ be a fixed point of $P_\lambda$, and let, for short, $y(t)$ denote the solution $x(\lambda, \widehat \psi, t)$ of \eqref{equ-RFDE}.
Because of Proposition \ref{proposition-extension} (property $1$), the restriction of $y$ to the interval $[-T,0]$ coincides with $\psi$.
It remains to show that $y$ is a $T$-periodic function.

Consider first the restriction of $y$ to the half line $(-\infty,0]$.
Since this restriction coincides with $\widehat\psi$, according to Proposition \ref{proposition-extension} (property $2$), $y$ is $T$-periodic on $(-\infty,0]$ provided that $\psi(-T) = \psi(0)$.
As pointed out above, $\psi(t) = y(t)$ for $t \in [-T,0]$.
Moreover, since $\psi$ is a fixed point of $P_{\lambda}$, we get $\psi(t) = y(t+T)$ for $-T \leq t \leq 0$.
Thus, $\psi(-T) = \psi(0)$.

To prove that $y$ is $T$-periodic also on the interval $[0,+\infty)$, consider the function $z(t) \cl= y(t+T)$ and observe that it satisfies the same initial value problem as $y(t)$. Thus, because of the uniqueness of the solution of this problem, we finally get $y(t) = y(t+T)$ also for $t > 0$.
\end{proof}

\begin{proposition} \label{prop-compact}
The map $P\cl [0,+\infty) \times \widetilde M \to \widetilde M$, defined by
$(\lambda,\psi)\mapsto P_\lambda(\psi)$, is completely continuous.
\end{proposition}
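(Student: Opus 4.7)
The plan is to prove complete continuity in its usual two components: continuity of $P$, and the property that $P$ maps bounded subsets of $[0,+\infty)\times\widetilde M$ to relatively compact subsets of $\widetilde M$. Since $M$ is compact, $\widetilde M$ is itself a bounded metric space, so bounded subsets of $[0,+\infty)\times\widetilde M$ have bounded $\lambda$-projection and it is enough to treat sets of the form $[0,\bar\lambda]\times\widetilde M$ with $\bar\lambda>0$.

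For the relative compactness, fix $\bar\lambda>0$ and let $K>0$ be a uniform bound on $\|f\|$ over $[0,\bar\lambda]\times\R\times C((-\infty,0],M)$, furnished by the hypothesis on $f$. For every $(\lambda,\psi)\in[0,\bar\lambda]\times\widetilde M$ the unique solution $t\mapsto x(\lambda,\widehat\psi,t)$ of the initial value problem \eqref{pb-lf} then satisfies $\|x'(t)\|\le K$ on $(0,+\infty)$, so $P_\lambda(\psi)(s)=x(\lambda,\widehat\psi,s+T)$ is $K$-Lipschitz in $s\in[-T,0]$ with values in the compact set $M$. Ascoli--Arzel\`a yields that $P([0,\bar\lambda]\times\widetilde M)$ is relatively compact in $\widetilde M$.

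For the continuity, take $(\lambda_n,\psi_n)\to(\lambda_0,\psi_0)$. By the preceding step the sequence $\{P_{\lambda_n}(\psi_n)\}$ is relatively compact, so it suffices to show that every convergent subsequence has limit $P_{\lambda_0}(\psi_0)$. Put $y_n(t)=x(\lambda_n,\widehat{\psi_n},t)$. Proposition \ref{proposition-extension} gives $\widehat{\psi_n}\to\widehat{\psi_0}$ uniformly on compact subintervals of $(-\infty,0]$, and in particular $y_n(0)\to\widehat{\psi_0}(0)$. Since the $y_n$ are equi-Lipschitz with constant $K$ on $[0,T]$ and $M$-valued, up to a subsequence they converge uniformly on $[0,T]$ to some $\bar y\colon[0,T]\to M$; extending $\bar y$ backwards by $\widehat{\psi_0}$ on $(-\infty,0]$ produces a continuous $M$-valued map on $(-\infty,T]$.

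The crux is to pass to the limit in the integral identity
\[
y_n(t)=y_n(0)+\int_0^t f(\lambda_n,s,(y_n)_s)\,ds,\qquad t\in[0,T].
\]
For each fixed $s\in[0,T]$ the history segment $(y_n)_s$ converges to $\bar y_s$ in $C((-\infty,0],M)$: on $[-s,0]$ this uses the uniform convergence of $y_n$ on $[0,T]$, while on any compact subinterval $[a,-s]$ of $(-\infty,-s]$ it reduces, after translation by $s$, to the uniform convergence of $\widehat{\psi_n}$ on the compact subinterval $[s+a,0]$ of $(-\infty,0]$. Continuity of $f$ then yields pointwise convergence $f(\lambda_n,s,(y_n)_s)\to f(\lambda_0,s,\bar y_s)$, and the uniform bound $\|f\|\le K$ justifies dominated convergence, giving $\bar y(t)=\widehat{\psi_0}(0)+\int_0^t f(\lambda_0,s,\bar y_s)\,ds$ on $[0,T]$. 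By the uniqueness part of Proposition \ref{prop-esiuni} (which is where hypothesis (\~ H) enters), $\bar y$ coincides with $x(\lambda_0,\widehat{\psi_0},\cdot)$ on $[0,T]$, so $P_{\lambda_n}(\psi_n)(s)=y_n(s+T)\to\bar y(s+T)=P_{\lambda_0}(\psi_0)(s)$ uniformly in $s\in[-T,0]$. The main obstacle is precisely this convergence of history segments in the Fr\'echet-type topology, which forces one to simultaneously control the ``new'' portion on $[0,T]$ (from the Ascoli limit of $y_n$) and the ``old'' portion on $(-\infty,0]$ (from the continuity of $\psi\mapsto\widehat\psi$); everything else is a routine dominated-convergence and uniqueness argument.
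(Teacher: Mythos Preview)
Your proof is correct and follows essentially the same strategy as the paper: both use the boundedness of $f$ together with Ascoli's Theorem to get relative compactness, and then the Dominated Convergence Theorem on the integral identity plus uniqueness of solutions (Proposition~\ref{prop-esiuni}) to obtain continuity. The only difference is presentational: the paper packages the argument by first showing that the solution map $(\lambda,\eta)\mapsto x(\lambda,\eta,\cdot)$ from $[0,+\infty)\times C((-\infty,0],M)$ to $C(\R,M)$ has closed graph and sends bounded sets to relatively compact ones (hence is completely continuous), and then observes that $P$ is a composition of continuous maps with this one, whereas you carry out the same computation directly on $P$.
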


\begin{proof}
Denote by $C(\R,M)$ the set of continuous maps from $\R$ into $M$.
This is a closed subset of the Fr\'echet space $C(\R,\R\sp{k})$ and, therefore, inherits the topology of uniform convergence on compact intervals, which is metrizable with complete metric.
Consider the map $(\lambda,\eta) \mapsto x(\lambda, \eta, \cdot)$ that to any $(\lambda,\eta) \in [0,+\infty) \times C((-\infty,0],M)$ assigns the solution in $C(\R,M)$ of the initial value problem \eqref{pb-lf}. Observe that, because of the Dominated Convergence Theorem, this map has closed graph.
Therefore, it is continuous if (and only if) it sends compact sets into relatively compact sets. Recall that, by assumption, the functional field $f$ is bounded on any set $[0, \bar \lambda] \times \R \times C((-\infty,0],M)$, $\bar \lambda>0$.
Consequently, because of Ascoli's Theorem, the map $(\lambda, \eta) \mapsto x(\lambda, \eta, \cdot)$ sends any such a set into a relatively compact subset of $C(\R,M)$. Thus, this map is actually completely continuous.
The assertion now follows from the fact that the operator $P$ is the composition of continuous maps, one of them (the above one) completely continuous.
\end{proof}

The following topological lemma is needed.

\begin{lemma}[\cite{FuPe93a}] \label{vecchiolemma}
Let $Z$ be a compact subset of a locally compact metric space $Y$. Assume
that any compact subset of $Y$ containing $Z$ has nonempty boundary. Then
$Y \backslash Z$ contains a connected set whose closure is not compact
and intersects $Z$.
\end{lemma}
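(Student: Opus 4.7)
The approach is by contradiction: suppose that every connected $C\subseteq Y\setminus Z$ with $\overline C\cap Z\neq\emptyset$ has compact closure in $Y$. I will then produce a compact clopen subset of $Y$ containing $Z$; since such a set has empty boundary in $Y$, this contradicts the hypothesis. The central topological tool is the classical Whyburn--Janiszewski separation lemma for compact metric spaces: if $A,B$ are disjoint closed subsets of a compact metric space $K$ and no connected component of $K$ meets both $A$ and $B$, then $K$ splits as a disjoint union $K=K_A\sqcup K_B$ of clopen subsets of $K$, with $A\subseteq K_A$ and $B\subseteq K_B$.

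The local step is as follows. Using local compactness of $Y$ and compactness of $Z$, first pick an open neighbourhood $V$ of $Z$ with $\overline V$ compact, and apply the separation lemma to $\overline V$ with $A:=Z$ and $B:=\partial V=\overline V\setminus V$. If the separation alternative occurs, then $K_A\cap\partial V=\emptyset$ forces $K_A\subseteq V$; since $K_A$ is open in $\overline V$ and contained in the open set $V\subseteq Y$, it is open in $Y$, and being compact it is closed in $Y$, so $K_A$ is a compact clopen subset of $Y$ containing $Z$ and the proof is complete. Otherwise, some component $\Gamma$ of $\overline V$ meets both $Z$ and $\partial V$; the boundary-bumping theorem applied to the continuum $\Gamma$ with the closed subset $\Gamma\cap Z$ shows that the component $D$ in $\Gamma\setminus Z$ of any point $p\in\Gamma\cap\partial V$ satisfies $\overline D\cap Z\neq\emptyset$. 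Thus $D$ is a connected subset of $Y\setminus Z$ whose closure meets $Z$, and under the standing assumption $\overline D$ is compact.

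The remaining step---and the main obstacle---is to globalise this local picture into a compact clopen subset of $Y$ containing $Z$. The natural candidate is the set $K^{\ast}:=Z\cup\bigcup\{\overline C:C\text{ a component of }Y\setminus Z\text{ with }\overline C\cap Z\neq\emptyset\}$, each piece of which is compact by the standing assumption. Openness of $K^{\ast}$ in $Y$ follows from local compactness together with a further application of the local separation step at points of $Z$: any sufficiently small, relatively compact neighbourhood of $z\in Z$ is, by that argument, contained in $K^{\ast}$. The delicate point is proving \emph{compactness} of $K^{\ast}$: a priori the component closures $\overline C$ could escape every compact region. I would argue that a sequence $\{p_n\}\subseteq K^{\ast}$ with no accumulation point in $Y$, once reduced to points lying in distinct component closures $\overline{C_n}$ with base-points $z_n\in\overline{C_n}\cap Z$ extracted so that $z_n\to z^{\ast}\in Z$, would yield through a standard limit-of-continua argument in the locally compact metric space $Y$ a connected subset of $Y\setminus Z$ with noncompact closure meeting $Z$---contradicting the standing assumption. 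Once $K^{\ast}$ is shown to be compact and clopen, $\partial K^{\ast}=\emptyset$ and we reach the desired contradiction with the hypothesis.
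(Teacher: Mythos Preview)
The paper does not prove this lemma: it is quoted from \cite{FuPe93a} and stated without proof, so there is no in-paper argument to compare against. I can therefore only evaluate your proposal on its own terms.

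Your overall plan---negate the conclusion and manufacture a compact subset of $Y$ containing $Z$ with empty boundary---is the standard and correct strategy, and invoking the Whyburn separation lemma inside a compact neighbourhood $\overline V$ is exactly the right tool. The local step is fine. The globalisation, however, has two genuine gaps.

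First, your openness argument for $K^{\ast}$ only treats points of $Z$. You must also show $K^{\ast}$ is open at each $y\in K^{\ast}\setminus Z$, i.e.\ at points lying in some component $C$ of $Y\setminus Z$ with $\overline C\cap Z\neq\emptyset$. A nearby point $y'$ may lie in a different component $C'$ of $Y\setminus Z$, and nothing you have written forces $\overline{C'}\cap Z\neq\emptyset$; absent local connectedness of $Y$ (which is not assumed), components of $Y\setminus Z$ need not be open, so this is a real issue, not a routine omission.

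Second, the compactness of $K^{\ast}$---which you yourself flag as ``the delicate point''---is not proved. You sketch a plan (extract continua $\overline{C_n}$ joining $z_n\to z^{\ast}$ to escaping points $p_n$ and pass to a limit), but the phrase ``a standard limit-of-continua argument in the locally compact metric space $Y$'' hides all the work: in a noncompact space one must run the limit inside a compact exhaustion $V_1\subseteq V_2\subseteq\cdots$, use boundary bumping in each $\overline{V_m}$ to produce continua from near $z^{\ast}$ to $\partial V_m$, take Hausdorff limits there, and then assemble the resulting nested continua into a single connected set in $Y\setminus Z$ whose closure meets $Z$ and is noncompact. None of this is written down. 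Until it is, the proof is a (reasonable) outline rather than a proof.
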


\begin{remark} \label{nuovolemma}
Let $(Y,Z)$ be a topological pair satisfying the assumptions of Lemma \ref{vecchiolemma}. It is not hard to see that, given any compact subset $K$ of $Y$ containing $Z$, the pair $(Y,K)$ verifies the assumptions of Lemma \ref{vecchiolemma} as well.
\end{remark}

In the sequel, given a metric space $Y$, a subset $G$ of $\R \times Y$ and $\lambda \in \R$, we will
denote by $G_\lambda$ the slice $\{y \in Y : (\lambda,y) \in G\}$.

\begin{theorem}
\label{teo-ipotesilemma}
Let $M$ be a compact $\partial$-manifold with nonzero
Euler--Poincar\'e characteristic, 
and let $f\cl [0,+\infty) \times \R \times C((-\infty,0],M) \to \R\sp{k}$ be an inward parametrized functional field on $M$ which is $T$\mbox{-}periodic in the 
second variable.
Suppose that $f$ is bounded on any set $[0,\bar \lambda] \times \R \times C((-\infty,0],M)$ and verifies condition \eqref{equ-lambdazero}
 and assumption (\~ H).
Let
\[
\Sigma = \big\{ (\lambda,x) \in [0,+ \infty) \times C_T(M) :
(\lambda,x) \mbox{ is a $T$-periodic pair of \eqref{equ-RFDE}} \big\}.
\]
Then the pair $(\Sigma,\{0\} \times \Sigma_0)$ verifies the assumptions of Lemma \ref{vecchiolemma}.

Consequently, the equation \eqref{equ-RFDE} admits an unbounded connected set of nontrivial $T$-periodic pairs whose closure meets the set of the trivial $T$-periodic pairs.
In particular, for any fixed $\lambda$, the equation \eqref{equ-RFDE} has a $T$-periodic solution.
\end{theorem}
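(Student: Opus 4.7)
I would translate the problem into a fixed-point problem via the family $P_\lambda\cl \widetilde M \to \widetilde M$ supplied by Propositions \ref{prop-fixed} and \ref{prop-compact}, and then combine the fixed point index with Lemma \ref{vecchiolemma}. Concretely, the set $S\subseteq[0,+\infty)\times\widetilde M$ of source pairs is isometric to $\Sigma$ via $\Theta$, and coincides with the zero set of $(\lambda,\psi)\mapsto P_\lambda(\psi)-\psi$; since $P$ is completely continuous, $S$ is closed and locally compact. Condition \eqref{equ-lambdazero} and the uniqueness in Proposition \ref{prop-esiuni} imply that the solution of \eqref{pb-lf} at $\lambda=0$ is constantly $\widehat\psi(0)=\psi(0)$ on $[0,+\infty)$, so $P_0(\psi)$ is the constant map $t\mapsto\psi(0)$. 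Consequently $S_0$ equals the set of constant maps in $\widetilde M$, and $\{0\}\times S_0$ is homeomorphic to $M$, hence compact. Thus the first two hypotheses of Lemma \ref{vecchiolemma} are in place for $(S,\{0\}\times S_0)$.

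To check the boundary condition, I would argue by contradiction: assume there exists a compact subset $K\subseteq S$ containing $\{0\}\times S_0$ with empty boundary in $S$. By the local compactness of $S$ and the normality of $[0,+\infty)\times\widetilde M$, one can choose $c>\sup\{\lambda:(\lambda,\psi)\in K\}$ together with a bounded open set $\Omega\subseteq[0,c]\times\widetilde M$ satisfying $K\subseteq\Omega$, $\overline{\Omega}\cap S\subseteq K$, and $\Omega_c=\emptyset$. Then $P$ has no fixed points on the part of $\partial\Omega$ contained in $[0,c]\times\widetilde M$, so the generalized homotopy invariance of the fixed point index implies that
\[
\iota(\lambda) \cl = \ind_{\widetilde M}(P_\lambda,\Omega_\lambda)
\]
is constant for $\lambda\in[0,c]$. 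At $\lambda=c$ the slice is empty, whence $\iota(c)=0$.

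At $\lambda=0$, excision allows me to replace $\Omega_0$ by $\widetilde M$, since all fixed points of $P_0$ sit inside $\Omega_0$ by construction. Writing $P_0=i\circ e_0$, where $e_0\cl\widetilde M\to M$ is evaluation at $0$ and $i\cl M\to\widetilde M$ embeds $q$ as the constant map $\bar q$, the commutativity and normalization properties of the fixed point index give
\[
\iota(0)=\ind_{\widetilde M}(i\circ e_0,\widetilde M)=\ind_M(e_0\circ i,M)=\ind_M(\mathrm{id}_M,M)=\Lambda(\mathrm{id}_M)=\chi(M)\neq 0,
\]
contradicting $\iota(0)=\iota(c)=0$. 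Therefore $(S,\{0\}\times S_0)$, and equivalently $(\Sigma,\{0\}\times\Sigma_0)$, satisfies the hypotheses of Lemma \ref{vecchiolemma}, which produces a connected set $\mathcal C\subseteq\Sigma\setminus(\{0\}\times\Sigma_0)$ whose closure is not compact in $\Sigma$ and meets $\{0\}\times\Sigma_0$.

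To derive the last assertion, I would observe that the boundedness of $f$ on each slab $[0,\bar\lambda]\times\R\times C((-\infty,0],M)$ provides a uniform bound on the derivatives of the corresponding $T$-periodic solutions, so Ascoli's theorem makes $\Sigma\cap([0,\bar\lambda]\times C_T(M))$ compact. Since $\mathcal C$ is unbounded in $\Sigma$, its projection onto $[0,+\infty)$ cannot be bounded, and connectedness then forces this projection to equal $[0,+\infty)$, giving a $T$-periodic solution for every $\lambda\geq0$. The step I expect to be most delicate is the index computation at $\lambda=0$: applying commutativity on the non-compact ANR $\widetilde M$ requires checking admissibility of both $(i\circ e_0,\widetilde M)$ and $(e_0\circ i,M)$, which works precisely because $P_0$ collapses $\widetilde M$ onto the compact set $i(M)$ of constant maps.
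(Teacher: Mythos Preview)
Your proposal is correct and follows essentially the same route as the paper: translate to source pairs via $\Theta$, identify these with fixed points of $P_\lambda$, use complete continuity of $P$ to get local compactness of $S$, argue by contradiction through generalized homotopy invariance of the index, and compute $\ind_{\widetilde M}(P_0,\widetilde M)=\chi(M)$ via commutativity and normalization. Your explicit factorization $P_0=i\circ e_0$ is exactly what underlies the paper's more compressed remark that ``$P_0|_M$ coincides with the identity $I_M$'', and your final argument (Ascoli on each slab $[0,\bar\lambda]$ forces the $\lambda$-projection of $\mathcal C$ to be unbounded) matches the paper's, which phrases it slightly differently by noting that $C_T(M)$ itself is bounded.
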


\begin{proof}
Let $S \subseteq [0,+ \infty) \times \widetilde M$ be the set of the source pairs of equation \eqref{equ-RFDE}.
Because of Proposition \ref{prop-fixed},
\[
S = \big\{ (\lambda,\psi) \in [0,+ \infty) \times \widetilde M :
P_\lambda(\psi) = \psi \big\}.
\]
Therefore, due to Proposition \ref{prop-compact}, $S$ is a locally compact metric space.
Let us show that the subset $\{0\} \times S_0$ of $S$ is compact.
In fact, because of \eqref{equ-lambdazero}, given $\psi \in \widetilde M$, we get $x(0,\widehat \psi,t) = \psi(0)$ for all $t \geq 0$.
Hence $P_0$ sends $\widetilde M$ onto the set of the constant $M$-valued functions, and this set, which coincides with $S_0$, can be identified with the compact manifold $M$.

We claim that the pair $(S,\{0\} \times S_0)$ verifies the assumptions of Lemma \ref{vecchiolemma}.
Assume, by contradiction, that there exists a compact set $\widehat S \subseteq S$ containing $\{0\}\times S_0$ and with empty boundary in $S$.
Thus, $\widehat S$ is also an open subset of the metric space $S$.
Hence, there exists a bounded open subset $\Omega$ of $[0,+ \infty) \times \widetilde M$ such that $\widehat S = \Omega \cap S$. 
Since $\widehat S$ is compact, the generalized homotopy invariance property of the fixed point index implies that $\ind_{\widetilde M} (P_\lambda,\Omega_\lambda)$ does not depend on $\lambda \in [0,+ \infty)$.
Moreover, the slice $\widehat S_\lambda = \Omega_\lambda \cap S_\lambda$ is empty for some $\lambda$.
This implies that $\ind_{\widetilde M} (P_\lambda, \Omega_\lambda) = 0$ for any $\lambda \in [0,+ \infty)$
and, in particular, $\ind_{\widetilde M} (P_0, \Omega_0) = 0$.
Since $\Omega_0$ is an open subset of $\widetilde M$ containing
$S_0$, by the excision property of the fixed point index we obtain
\begin{equation}
\label{excision}
 \ind_{\widetilde M} (P_0,\widetilde M) = \ind_{\widetilde M} (P_0,\Omega_0) = 0.
\end{equation}
As pointed out above, $P_0$ sends $\widetilde M$ onto the subset $S_0$ of the constant $M$-valued functions, which will be identified with $M$. According to this identification, the restriction $P_0|_M$ coincides with the identity $I_M$ of $M$.
Therefore, by the commutativity and normalization properties of the 
fixed point index, we get
\[
\ind_{\widetilde M} (P_0,\widetilde M) = \ind_M (P_0|_M, M)= \Lambda(I_M) = \chi(M) \neq 0,
\]
contradicting \eqref{excision}. Therefore, as claimed, $(S,\{0\} \times S_0)$ satisfies the assumptions of Lemma \ref{vecchiolemma}.

Now, to prove the assertion about the pair $(\Sigma,\{0\} \times \Sigma_0)$
observe that the map $\Theta\cl \Sigma \to S$ which associates to any $T$-periodic pair $(\lambda,x)$ the corresponding source pair $(\lambda,\psi)$ is a homeomorphism.
Moreover, the sets $\{0\} \times \Sigma_0$ and $\{0\} \times S_0$ correspond under this homeomorphism.
Hence, the pair $(\Sigma,\{0\} \times \Sigma_0)$ verifies the assumptions of
Lemma \ref{vecchiolemma} as well.

It remains to prove the last assertion.
Because of Lemma \ref{vecchiolemma}, there exists a connected subset
$A$ of $\Sigma$ whose closure in $\Sigma$ intersects $\{0\}\times M$
and is not compact.
Since $\Sigma$ is a closed subset of $[0,+ \infty) \times C_T(M)$, the
closure $\overline A$ of $A$ in $\Sigma$ is the same as in
$[0,+ \infty) \times C_T(M)$. Thus, $\overline A$ cannot be bounded since,
otherwise, it would be compact because of Ascoli's Theorem. Moreover,
since $C_T(M)$ is bounded, the set $A$ is necessarily unbounded in
$\lambda$. This implies, in particular, that the equation
\eqref{equ-RFDE} has a $T$-periodic solution for any $\lambda \ge 0$.
\end{proof}


\section{Continuation results for motion equations}
\label{sect-continuation-2ord}
\setcounter{equation}{0}

Let $X \subseteq \R \sp{s}$ be a boundaryless manifold.
Given $q\in X$, let $(T_qX)\sp{\perp} \subseteq \R \sp{s}$ denote the normal space of $X$ at $q$. Since $\R\sp{s} = T_qX \oplus
(T_qX)\sp{\perp}$, any vector $u \in
\R\sp{s}$ can be uniquely decomposed into the sum of the \emph{parallel} (or
\emph{tangential}) \emph{component $u_\pi\in T_qX$ of $u$ at $q$} and the
\emph{normal component $u_\nu\in (T_qX)\sp{\perp}$ of $u$ at $q$}. By
\[
TX = \left\{ (q,v)\in \R\sp{s}\times \R\sp{s} : \; q\in X,\;v \in T_qX\right\}
\]
we denote the \emph{tangent bundle of $X$}, which is a smooth manifold
containing a natural copy of $X$ via the embedding $q \mapsto (q,0)$. The
natural projection of $TX$ onto $X$ is just the restriction (to $TX$ as
domain and to $X$ as codomain) of the projection of $\R\sp{s}\times \R\sp{s}$
onto the first factor.

It is known that, associated with $X \subseteq \R\sp{s}$, there exists a unique
smooth map
$R\cl TX \to
\R\sp{s}$, called the
\emph{reactive force} (or \emph{inertial reaction}), with the following
properties:
\begin{itemize}
\item [(a)] $R(q,v)\in (T_qX)\sp{\perp}$ for any $(q,v)\in TX$;
\item [(b)] $R$ is quadratic in the second variable;
\item [(c)] given $(q,v) \in TX$, $R(q,v)$ is the unique vector such that $(v, R(q,v))$ belongs to $T_{(q,v)}(TX)$;
\item [(d)] any $C\sp{2}$ curve $\gamma\cl (a,b) \to X$ verifies the condition
$\gamma''_{\nu}(t) = R(\gamma(t), \gamma'(t))$ for any $t\in (a,b)$, i.e.\ for each $t\in (a,b)$, the normal component $\gamma''_\nu(t)$ of $\gamma''(t)$ at $\gamma(t)$ equals $R(\gamma(t), \gamma'(t))$.
\end{itemize}

\smallskip

Let $F \cl \R \times C((-\infty, 0],X) \to \R\sp{s}$ be a functional
field which is $T$-periodic in the first variable.
Consider the following retarded functional motion equation on $X$,
depending on a parameter $\lambda \geq 0$:
\begin{equation}
\label{equ-motion-le}
x_\pi''(t) = 
\lambda \left( F (t,x_t) - \varepsilon x'(t) \right),
\end{equation}
where $x''_\pi (t)$ stands for the parallel component of the
acceleration $x''(t) \in \R\sp{s}$ at the point $x(t)$,
and $\varepsilon \geq 0$ is the frictional coefficient.
By properties (a) and (d) above, equation
\eqref{equ-motion-le} can be equivalently written as
\begin{equation}
\label{secorcomplete}
x''(t) = R(x(t), x'(t)) + \lambda \left( F (t,x_t) - \varepsilon x'(t) \right).
\end{equation}
Given $\lambda \geq 0$,
equation \eqref{secorcomplete} is equivalent to the RFDE
\begin{equation} \label{sys-motion}
\left\{
\begin{array}{l}
x'(t) = y(t),\\
y'(t) = R(x(t), y(t)) + \lambda \left( F (t,x_t) - \varepsilon y(t) \right)
\end{array}
\right.
\end{equation}
in the following sense: a function $x\cl J \to M$ is a solution of \eqref{secorcomplete} if and only if the pair $(x,x')$ is a solution of \eqref{sys-motion}.
Let us stress that system \eqref{sys-motion} is actually a RFDE, which can be written as
\[
(x'(t),y'(t))=G(\lambda, t, (x_t,y_t)),
\]
where the map $G\cl [0,+\infty) \times \R \times
C((-\infty, 0],TX) \to \R\sp{s}\times \R\sp{s}$ is defined by
\[
G (\lambda, t, (\varphi,\psi)) =
(\psi(0), R(\varphi(0),\psi(0)) + \lambda \left( F (t,\varphi) - \varepsilon \psi(0) \right)).
\]
We remark that $G$ is a parametrized functional field on $TX$.
Indeed, the condition
\[
G (\lambda, t, (\varphi,\psi)) \in T_{(\varphi(0),\psi(0))} TX
\]
is verified for all $(\lambda, t, (\varphi,\psi)) \in [0,+\infty) \times \R \times C((-\infty, 0],TX)$ (see, for example, \cite{Fu} for more details).

Observe that, if equation \eqref{secorcomplete} reduces to the so-called \emph{inertial equation}
\[
x''(t)=R(x(t),x'(t)),
\]
one obtains the geodesics of $X$ as solutions.

\smallskip

Given $\lambda \geq 0$, any $T$-periodic solution of
\eqref{equ-motion-le} is called a \emph{forced oscillation}
corresponding to the value $\lambda$ of the parameter.
In other words, a forced oscillation is a solution of the differential
equation \eqref{equ-motion-le} which belongs to $C\sp{1}_T(X)$,
the metric subspace of the Banach space $C\sp{1}_T(\R\sp{s})$ of the
$T$-periodic $C\sp{1}$ maps $x\cl \R \to X$.

We will say that $(\lambda,x) \in [0,+ \infty)\times C\sp{1}_T(X)$
is a \emph{$T$-forced pair} of \eqref{equ-motion-le} if
$x\cl \R \to X$ is a forced oscillation of \eqref{equ-motion-le}
corresponding to $\lambda$.
That is, $(\lambda,x) \in [0,+ \infty)\times C\sp{1}_T(X)$ 
is a $T$-forced pair of \eqref{equ-motion-le} if and only if 
$(\lambda,(x,x')) \in [0,+ \infty)\times C_T(TX)$
is a $T$-periodic pair of system \eqref{sys-motion}.

Denote by $\Gamma$ the closed subset of
$[0,+ \infty)\times C\sp{1}_T(X)$ of all the $T$-forced pairs of
\eqref{equ-motion-le}.
Notice that, by Ascoli's Theorem, the set $\Gamma$ is locally compact.
Given $q \in X$, we denote by $\bar q \in C\sp{1}_T(X)$ the constant
map $t \mapsto q$, $t \in \R$.
We will call \emph{trivial $T$-forced pair} a pair of the form
$(0,\bar q)$, $q \in X$, and we will say that $X$ is the
\emph{trivial solutions' manifold} of $\Gamma$.

An element $q_0 \in X$ will be called a \emph{bifurcation point} of the
equation \eqref{equ-motion-le} if every neighborhood of
$(0, \bar q_0)$ in $[0,+ \infty) \times C\sp{1}_T(X)$ contains a nontrivial
$T$-forced pair, i.e.\ an element of $\Gamma\setminus X$.
One can show that any nontrivial $T$-forced pair 
$(\lambda,x)$ of \eqref{equ-motion-le}, sufficiently close to 
$(0, \bar q_0)$, must have $\lambda >0$.
This is due to the fact that the solutions of the inertial
equation are geodesics of $X$, and there are no nontrivial closed geodesics too
close to a given point in a Riemannian manifold.

\smallskip

We are now in the position to state our main result.

\begin{theorem} \label{teo-rami2ord}
Let $X \subseteq \R\sp{s}$ be a compact boundaryless manifold
whose Euler--Poincar\'e characteristic $\chi (X)$ is different from
zero, and $F \cl \R \times C((-\infty, 0],X) \to \R\sp{s}$ a functional
field which is $T$-periodic in the first variable.
Suppose that $F$ is bounded and verifies (H), and let $\varepsilon\geq 0$ be given.
Then, the equation \eqref{equ-motion-le}
admits
an unbounded connected set of nontrivial $T$-forced pairs whose closure
meets the set of the trivial
$T$-forced pairs at some bifurcation point.
If, in addition, $\varepsilon$ is positive, then this connected set is necessarily unbounded with respect to $\lambda$.
\end{theorem}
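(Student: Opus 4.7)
The plan is to recast \eqref{equ-motion-le} as the first-order RFDE \eqref{sys-motion} on the tangent bundle $TX\subseteq\R^{2s}$ and apply Theorem \ref{teo-ipotesilemma}. Since $TX$ is non-compact, I would truncate its fibers by working on the closed disk bundle $M_r=\{(q,v)\in TX:\|v\|\leq r\}$, a compact $\partial$-manifold that deformation retracts onto $X$; in particular $\chi(M_r)=\chi(X)\neq 0$. This is the setting in which the ``$M$ with boundary'' clause of Theorem \ref{teo-ipotesilemma} becomes crucial.

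The field
\[
G(\lambda,t,(\varphi,\psi)) = \bigl(\psi(0),\, R(\varphi(0),\psi(0)) + \lambda(F(t,\varphi)-\varepsilon\psi(0))\bigr)
\]
fails two hypotheses of Theorem \ref{teo-ipotesilemma}: at $\lambda=0$ it reduces to the geodesic vector field, which does not vanish identically, and on $\partial M_r$ it need not be inward to $M_r$. The main technical step is to construct a modified parametrized functional field $\tilde G_r$ on $M_r$ with (i) $\tilde G_r(0,\cdot,\cdot)\equiv 0$; (ii) $\tilde G_r$ strictly inward to $M_r$; (iii) $\tilde G_r$ coinciding with $G$ on $[\lambda_0,+\infty)\times\R\times C((-\infty,0],M_\rho)$ for suitable $0<\lambda_0<1$ and $\rho<r$; (iv) $\tilde G_r$ bounded on $\lambda$-slices and satisfying (\~ H). A scalar Lipschitz cutoff $\chi(\lambda)$ with $\chi(0)=0$ and $\chi(\lambda)=1$ for $\lambda\geq\lambda_0$, multiplied against $G$, addresses (i) and (iii); since the first component of $G$ is $\psi(0)\in T_{\varphi(0)}X$ and hence automatically tangent to the fibers of $TX$, a radial Lipschitz cutoff in $\|\psi(0)\|$ supported on $M_r\setminus M_\rho$ acting on the second component handles (ii). Verifying that (\~ H) and boundedness survive this construction is the chief obstacle of the proof.

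Next I would derive an a priori bound on $\|y\|_\infty$ for $T$-periodic pairs of the original system \eqref{sys-motion} with $\lambda\in[\lambda_0,\bar\lambda]$. Taking the inner product of $y'=R(x,y)+\lambda(F-\varepsilon y)$ with $y$ and using the key identity $y\cdot R(x,y)=0$ (valid because $R(x,y)\in(T_xX)^\perp$ while $y\in T_xX$) yields
\[
\tfrac{1}{2}\tfrac{d}{dt}\|y\|^2 = \lambda\,y\cdot F(t,x_t) - \lambda\varepsilon\|y\|^2.
\]
For $\varepsilon>0$, integrating over a period and invoking $T$-periodicity gives the $\lambda$-independent $L^2$ bound $\int_0^T\|y\|^2\,dt\leq T\|F\|_\infty^2/\varepsilon^2$; combined with $\|R(x,y)\|\leq K\|y\|^2$ (valid since $R$ is quadratic in $v$ and $X$ is compact) and the ODE itself, this upgrades to an $L^\infty$ bound on $y$ depending only on $\bar\lambda$, $\|F\|_\infty$, $\varepsilon$, $T$ and $K$. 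For $\varepsilon=0$ and each bounded $\lambda$-range, a cruder Gronwall-type estimate yields an analogous bound. Choosing $r$ large enough then ensures that every $T$-periodic pair of $\tilde G_r$ with $\lambda\in[\lambda_0,\bar\lambda]$ lies in $M_\rho$, where $\tilde G_r=G$, and hence corresponds via $(\lambda,x)\mapsto(\lambda,(x,x'))$ to a genuine $T$-forced pair of \eqref{equ-motion-le}.

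Applying Theorem \ref{teo-ipotesilemma} to $\tilde G_r$ on $M_r$ produces an unbounded connected set $\Sigma^{(r)}$ of $T$-periodic pairs of the modified system whose closure meets the trivial pairs $\{0\}\times M_r$; since $M_r$ is bounded, $\Sigma^{(r)}$ is unbounded in $\lambda$. I would then pick a connected component of $\Sigma^{(r)}\cap\{\lambda\geq\lambda_0\}$ that is unbounded in $\lambda$ and transfer it to $\Gamma$, obtaining a connected set of nontrivial $T$-forced pairs of \eqref{equ-motion-le} unbounded in $\lambda$. A $\lambda_0\to 0^+$ and $r\to+\infty$ limiting argument — exploiting Ascoli on bounded $\lambda$-slices together with the absence of small nontrivial closed geodesics noted in the preamble — then yields a single connected subset $A\subseteq\Gamma$ whose closure descends to $\{0\}\times X$ at some point $(0,\bar q_0)$, necessarily a bifurcation point by the preamble's remark. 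Finally, when $\varepsilon>0$, the uniform $\bar\lambda$-dependent $L^\infty$ bound on $x'$ combined with compactness of $X$ implies via Ascoli that $\Gamma\cap([0,\bar\lambda]\times C^1_T(X))$ is compact for every $\bar\lambda>0$, so the unbounded branch $A$ must be unbounded in $\lambda$.
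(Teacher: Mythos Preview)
Your overall architecture—truncate $TX$ to a disk bundle $M_r$, apply Theorem~\ref{teo-ipotesilemma}, transfer back via $(\lambda,(x,y))\mapsto(\lambda,x)$—is exactly the paper's. But two of your technical devices do not work as stated.

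First, a radial cutoff ``acting on the second component'' destroys the functional-field property. At a boundary point $(q,v)\in\partial M_r$ your field would become $(\chi(\lambda)v,0)$, and $(v,0)$ lies in $T_{(q,v)}(TX)$ only when $R(q,v)=0$: by property~(c) the vector $(v,R(q,v))$ is tangent, while vectors of the form $(0,b)$ are tangent only for $b\in T_qX$, and $R(q,v)\in(T_qX)^\perp$. So your $\tilde G_r$ is not even tangent to $TX$. If you repair this by multiplying \emph{both} components by the cutoffs, then for small $\lambda$ the system is a time-rescaling of the geodesic spray, and rescaled closed geodesics produce spurious $T$-periodic pairs that contaminate the branch. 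The paper sidesteps both problems with a different homotopy: for $0\le\mu\le1$ it uses $x'=\mu y$, $y'=\mu\bigl(R(x,y)-\varepsilon(1-\mu)y\bigr)$, which stays tangent (it is a scalar times the geodesic spray plus a vertical vector in $T_qX$), is inward, and—because $\tfrac{d}{dt}\|y\|^2=-2\varepsilon\mu(1-\mu)\|y\|^2$—has only constant $T$-periodic solutions.

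Second, and more fundamentally, the a~priori bound you claim for $\varepsilon=0$ is false. Take $F\equiv 0$: then \eqref{equ-motion-le} is the geodesic equation for every $\lambda$, and closed geodesics traversed at appropriate speeds give $T$-periodic solutions with $\|x'\|$ arbitrarily large on any $\lambda$-interval. No Gronwall argument can bound $\|y\|_\infty$ in terms of $\bar\lambda$ alone, so there is no $r$ for which the $T$-periodic pairs of your modified field on $M_r$ are guaranteed to sit in the region where $\tilde G_r=G$. This is precisely why the paper does not attempt a direct argument at $\varepsilon=0$: it first proves the result for $\varepsilon>0$ (where the pointwise inwardness computation at $\|\psi(0)\|=c$ gives $\langle\psi(0),F-\varepsilon\psi(0)\rangle\le Kc-\varepsilon c^2\le 0$ once $c\ge K/\varepsilon$), and then treats $\varepsilon=0$ by a contradiction argument using Lemma~\ref{vecchiolemma} together with a limit $\varepsilon_n\downarrow 0$ and Ascoli. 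Your $\lambda_0\to0$, $r\to\infty$ limit would have to be replaced by this $\varepsilon\to 0$ limit.
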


\begin{proof}

\textit{Step 1.}
Assume first $\varepsilon>0$, and consider the following system:
\begin{equation}
\label{equ-sys-mi}
\left\{
\begin{array}{l}
x'(t) = \alpha(\mu) \, y(t),\\
y'(t) = \alpha(\mu) \left( R(x(t), y(t)) - \varepsilon (1-\alpha(\mu)) y (t) \right) +
\beta(\mu) \left( F (t,x_t) - \varepsilon y (t) \right),
\end{array}
\right.
\end{equation}
where
\[
\alpha(\mu) = \left\{
\begin{array}{ll}
\mu &\mbox{ if } 0 \leq \mu \leq 1,\\
1 &\mbox{ if } \mu \geq 1
\end{array}
\right.
\]
and
\[
\beta(\mu) = \left\{
\begin{array}{ll}
0 &\mbox{ if } 0 \leq \mu \leq 1,\\
\mu-1 &\mbox{ if } \mu \geq 1.
\end{array}
\right.
\]
In this way, when $0 \leq \mu \leq 1$ we get
\[
\left\{
\begin{array}{l}
x'(t) = \mu \, y(t),\\
y'(t) = \mu \left( R(x(t), y(t)) - \varepsilon (1-\mu) y (t) \right) 
\end{array}
\right.
\]
and when $\mu \geq 1$ we obtain
\[
\left\{
\begin{array}{l}
x'(t) = y(t),\\
y'(t) = R(x(t), y(t)) +
(\mu-1) \, \left( F (t,x_t) - \varepsilon y (t) \right).
\end{array}
\right.
\]

Define $G\cl [0,+\infty) \times \R \times C((-\infty, 0],TX) 
\to \R\sp{s}\times \R\sp{s}$
by 
\[ 
G (\mu, t,(\varphi,\psi)) =
\Big( \alpha(\mu) \psi(0), \,
\alpha(\mu) \big( R(\varphi(0),\psi(0)) - \varepsilon (1-\alpha(\mu)) \psi (0) \big) 
+ \beta(\mu) \big( F (t,\varphi) - \varepsilon \psi(0)\big) \Big).
\] 
Clearly, $G$ is a parametrized $T$-periodic functional field on $TX$, which verifies assumption (\~H) since $F$ verifies (H).

Given $c>0$, set
\[
M_c = \big\{ (q,v) \in TX : \|v\| \leq c \big\}.
\]
It is not difficult to show that $M_c \subseteq TX$ is a compact 
$\partial$-manifold
in $\R\sp{s} \times \R\sp{s}$ with boundary
\[
\partial M_c = \big\{ (q,v) \in M_c : \|v\| = c \big\}.
\]

Let $G_c$ be the restriction of the map $G$ to 
$[0,+\infty) \times \R \times C((-\infty, 0],M_c)$.
Clearly, $G_c$ is a parametrized $T$-periodic functional field on $M_c$ which verifies (\~ H) since so does $G$.
Moreover, given any $\mu_0>0$, the map $G_c$ is bounded on $[0,\mu_0]
\times \R \times C((-\infty, 0],M_c)$. 
To see this notice that $0\leq \alpha(\mu) \leq1$ and
$0\leq \beta(\mu) \leq \beta(\mu_0)$.
Now, the map $F$ is bounded by assumption,
hence there is $K$ such that $\|F(t,\varphi)\| \leq K$
for all $(t,\varphi)$.
Moreover, the compactness of $M_c$ implies that 
there is $K_1$ such that $\| R(q,v)\| \leq K_1$ for any
$(q,v) \in M_c$.
Therefore, given $(\mu,t,(x,y)) \in [0,\mu_0]
\times \R \times C((-\infty, 0],M_c)$,
one has 
$\| \alpha(\mu) \psi(0) \| \leq c$,
\[
\| \alpha(\mu) \big( R(\varphi(0),\psi(0)) - \varepsilon (1-\alpha(\mu)) \psi (0) \big)\| \leq K_1+ \varepsilon c
\]
and
\[
\| \beta(\mu) \big( F (t,\varphi) - \varepsilon \psi(0) \big) \| \leq 
\beta(\mu_0) (K + \varepsilon c).
\]
This shows that $G_c$ is bounded on $[0,\mu_0]
\times \R \times C((-\infty, 0],M_c)$,
being the sum of bounded maps.

We claim that, if $c>0$ is large enough, then $G_c$ is 
 inward on $M_c$. 
To see this, observe that the tangent cone of $M_c$
at $(q,v) \in \partial M_c$ is the half subspace of $T_{(q,v)}M_c$ given by
\[
C_{(q,v)}M_c =
\big\{(\dot q, \dot v) \in T_{(q,v)}(TX):
\langle v, \dot v \rangle \leq 0 \big\},
\]
where $\langle \cdot, \cdot \rangle$ denotes the inner product in
$\R\sp{s}$.
Thus we have to show that, if $c>0$ is large enough, then
$G_c (\mu, t,(\varphi,\psi))$ belongs to 
$C_{(\varphi(0),\psi(0))} M_c$
for any $\mu \in [0,+\infty)$, $t \in \R$, and any pair
$(\varphi,\psi) \in C((-\infty, 0],M_c)$
such that $(\varphi(0),\psi(0)) \in \partial M_c$.
In other words, we need to prove that for any $\mu$, $t$ and any pair $(\varphi,\psi)$ such that $\| \psi(0) \| = c$, with $c$ to be chosen, we have
\[
\Big\langle \psi(0), 
\alpha(\mu) \big( R(\varphi(0),\psi(0)) - \varepsilon (1-\alpha(\mu)) \psi (0) \big) 
+ \beta(\mu) \big( F (t,\varphi)-\varepsilon \psi(0) \big) \Big\rangle \leq 0.
\]
That is,
\[
\alpha(\mu) \Big(
\big\langle \psi(0),R(\varphi(0),\psi(0)) \big\rangle 
- \varepsilon (1-\alpha(\mu)) \big\langle \psi(0),\psi(0) \big\rangle 
\Big) +
\beta(\mu) \Big(
 \big\langle \psi(0), F (t,\varphi) \big\rangle -
\varepsilon \big\langle \psi(0),\psi(0) \big\rangle 
\Big) \leq 0.
\]
Observe first that $\big\langle \psi(0),R(\varphi(0),\psi(0)) \big\rangle = 0$ since
$R(\varphi(0),\psi(0))$ belongs to $(T_{\varphi(0)}X)\sp{\perp}$ and that, clearly, $\big\langle \psi(0),\psi(0) \big\rangle = c\sp2$.
Assume now $0\leq \mu \leq 1$. Then, $\alpha (\mu)=\mu$ and $\beta (\mu)=0$, and thus
\[
\Big\langle \psi(0), 
\alpha(\mu) \big( R(\varphi(0),\psi(0)) - \varepsilon (1-\alpha(\mu)) \psi (0) \big) 
\Big\rangle =
- \varepsilon \mu (1-\mu) c\sp2 \leq 0.
\]
On the other hand, when $\mu \geq 1$ one has $\alpha (\mu)=1$ and
$\beta (\mu)=\mu -1$. In addition, one has
\[
\big\langle \psi(0), F (t,\varphi) \big\rangle \leq
\|\psi(0)\| \, \| F (t,\varphi)\| \leq K \|\psi(0)\|,
\]
recalling that $K$ is such that
$\| F (t,\varphi) \| \leq K$
for all
$(t,\varphi) \in \R \times C((-\infty, 0],X)$.
Consequently, 
\[
\Big\langle \psi(0), 
\alpha(\mu) \, R(\varphi(0),\psi(0)) 
+ \beta(\mu) \big( F (t,\varphi)-\varepsilon \psi(0) \big) \Big\rangle =
(\mu -1) \Big\langle \psi(0), F (t,\varphi)-\varepsilon \psi(0) \Big\rangle \leq
(\mu -1) (Kc-\varepsilon c\sp2).
\]
This shows that, if we choose $c \geq K/\varepsilon$, then $G_c$ is an inward parametrized functional field on $M_c$, as claimed
(here the condition $\varepsilon >0$ is crucial).

Let now
\[
\Sigma = \big\{ (\mu,(x,y)) \in [0,+ \infty) \times C_T(M_c) :
(\mu,(x,y)) \mbox{ is a $T$-periodic pair of \eqref{equ-sys-mi}} \big\}.
\]
Observe that the slice $\Sigma_0$ coincides with $M_c$.
Let us show that $\Sigma_\mu = X \times \{0\}$
for $0 < \mu < 1$.
To see this recall that, when $0< \mu <1$,
system \eqref{equ-sys-mi} becomes
\[
\left\{
\begin{array}{l}
x'(t) = \mu \, y(t),\\
y'(t) = \mu \left( R(x(t), y(t)) - \varepsilon \, (1- \mu) y(t) \right).
\end{array}
\right.
\]
Let $\delta(t) = \|y(t)\|\sp2$, $t\in\R$.
One has
\[
\delta'(t) = 2\langle y(t),y'(t) \rangle = 2\mu\big\langle y(t),\left( R(x(t), y(t)) - \varepsilon \, (1- \mu) y(t) \right) \big\rangle
= - 2 \varepsilon \mu (1- \mu) \|y(t)\|\sp2
= - a \delta(t),
\]
where $a=2 \varepsilon \mu (1- \mu)>0$. This proves that if $(\mu, (x,y))$ is a $T$-periodic pair with $0< \mu <1$,
then $y(t) =0$ for any $t$.
That is, $(x,y)$ belongs to $X \times \{0\}$.

Consider the compact set
\[
\Upsilon = \left(\{0\} \times M_c \right) \cup
\left([0,1] \times X \times \{0\} \right) \subseteq \Sigma.
\]
We claim that the pair $(\Sigma,\Upsilon)$
verifies the assumptions of Lemma \ref{vecchiolemma}.
To this end, observe first that
$\chi(M_c) = \chi(X)$ since $M_c$ and $X$ are homotopically
equivalent ($X$ being a deformation retract of $TX$).
Consequently, since $\chi(X) \neq 0$ by assumption,
we have $\chi(M_c) \neq 0$.
Thus, given $c>K/\varepsilon$, we can apply Theorem \ref{teo-ipotesilemma}
with $M=M_c$ and $f=G_c$, and we get that the pair $(\Sigma,\{0\}\times \Sigma_0)$
verifies the assumptions of Lemma \ref{vecchiolemma}.
Hence, being $\Upsilon$ a compact subset of $\Sigma$ containing $\{0\}\times \Sigma_0 = \{0\} \times M_c$, by Remark \ref{nuovolemma} the pair $(\Sigma,\Upsilon)$
verifies the assumptions of Lemma \ref{vecchiolemma}, as claimed.
Hence, there exists a connected subset
$A$ in $\Sigma \setminus \Upsilon$ whose closure in $\Sigma$ intersects $\Upsilon$
and is not compact.

Recall that, for $\mu \geq 1$, system \eqref{equ-sys-mi} becomes
\[
\left\{
\begin{array}{l}
x'(t) = y(t),\\
y'(t) = R(x(t), y(t)) +
(\mu -1) \left( F (t,x_t) - \varepsilon y (t) \right).
\end{array}
\right.
\]

Clearly, up to the change of variable $\lambda = \mu -1$, this coincides with system \eqref{sys-motion}, which is equivalent to equation \eqref{equ-motion-le}.
The above argument shows that system \eqref{sys-motion}
admits an unbounded connected set
$A \subseteq [0,+\infty) \times C_T(M_c)$
of nontrivial $T$-periodic pairs
whose closure in $[0,+\infty) \times C_T(M_c)$, which is the same as in $[0,+\infty) \times C_T(TX)$,
intersects $\{0\} \times C_T(TX)$
in a nonempty subset of $\{0\} \times X \times \{0\}$.
Notice that $A$ is necessarily unbounded with respect to $\lambda$, being $C_T(M_c)$ a bounded metric space.

Finally, to prove the assertion in the case $\varepsilon >0$,
consider the map
$\Pi\cl [0,+\infty) \times C_T(TX) \to [0,+\infty) \times C\sp{1}_T(X)$
associating to a $T$-periodic pair $(\lambda,(x,y))$ of system
\eqref{sys-motion} the $T$-forced pair $(\lambda,x)$ of equation
\eqref{equ-motion-le}.
Observe that $\Pi$ is continuous
with inverse given by $(\lambda,x) \mapsto (\lambda,(x,x'))$.
Moreover, the restriction of $\Pi$
to $\{0\} \times X \times \{0\}$ as domain and to $\{0\} \times X$ as
codomain can be regarded as the identity of $X$. 
Hence $B = \Pi(A)$ is an unbounded connected set of nontrivial $T$-forced pairs whose closure
in $[0,+ \infty)\times C\sp{1}_T(X)$
meets the set of the trivial
$T$-forced pairs.
Moreover, $B$ is clearly unbounded with respect to $\lambda$ since so is $A$.

\smallskip
\textit{Step 2.} Assume now $\varepsilon =0$.
Apply Lemma \ref{vecchiolemma} to the pair $(\Gamma,\{0\}\times X)$, where $\Gamma$ denotes the set of the $T$-forced pairs of equation
\begin{equation}
\label{equ-motion-l0}
x_\pi''(t) = 
\lambda F (t,x_t).
\end{equation}

As already pointed out, $\Gamma$ is a closed, locally
compact subset of $[0,+ \infty) \times C\sp{1}_T(X)$.
Assume, by contradiction, that there exists a compact set $\widehat
\Gamma \subseteq \Gamma$ containing $\{0\}\times X$ and with empty boundary in
the metric space $\Gamma$. Thus, $\widehat \Gamma$ is also an
open subset of $\Gamma$ and, consequently, $\Gamma
\backslash
\widehat \Gamma$ is closed in $[0,+ \infty) \times C\sp{1}_T(X)$. Hence, there
exists a bounded open subset $W$ of $[0,+ \infty) \times C\sp{1}_T(X)$ such that
$\widehat \Gamma \subseteq W$ and $\partial W \cap \Gamma = \emptyset$.

Let now $\{\varepsilon_n\}$ be a sequence such that 
 $\varepsilon_n >0$ and $\varepsilon_n\to 0$.
Given any $n \in \N$,
let $\Gamma_n$ denote the set of the $T$-forced pairs of the 
equation
\[
x_\pi''(t) = 
\lambda \left( F (t,x_t) - \varepsilon_n x'(t) \right).
\]
Since $W$ is bounded and contains $\{0\}\times X$, 
as a consequence of Step 1, for any $n \in \N$ there exists a pair $(\lambda_n,x\sp{n}) \in \Gamma_n \cap
\partial W$. We may assume $\lambda_n \to \lambda_0$ and, by Ascoli's Theorem,
$x\sp{n} \to x\sp 0$ in $C\sp{1}_T(X)$.
Thus, $x\sp 0$ is a $T$-periodic solution of the equation
\[
x_\pi''(t) = 
\lambda_0 \, F (t,x_t).
\]
That is, $(\lambda_0,x\sp 0)$ is a $T$-forced pair of \eqref{equ-motion-l0}
and, consequently, $(\lambda_0,x\sp 0)$ belongs to $\partial W \cap \Gamma$,
which is a contradiction. Therefore, by Lemma \ref{vecchiolemma} one can find
a connected branch $B$ of nontrivial $T$-forced pairs of
\eqref{equ-motion-l0} whose closure in $\Gamma$ (which is the same
as in $[0,+ \infty) \times C \sp{1}_T(X)$) intersects
$\{0\}\times X$ and is not compact. 
Finally, $B$ cannot be bounded since,
otherwise, because of Ascoli's Theorem, its closure would be compact. 
This completes the proof.
\end{proof}

As a straightforward consequence of Theorem \ref{teo-rami2ord} we get the following existence result (see \cite{BCFP5}).

\begin{corollary}
\label{coroll-esist}
Let $X$ and $F$ be as in Theorem \ref{teo-rami2ord} and let $\varepsilon>0$.
Then, the equation
\begin{equation}
\label{equ-motion-epsilon}
x_\pi''(t) = F (t,x_t) - \varepsilon x'(t)
\end{equation}
has a $T$-periodic solution.
\end{corollary}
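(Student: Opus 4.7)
The plan is to deduce the corollary directly from Theorem \ref{teo-rami2ord}. First I would apply that theorem to the given data, with the given $\varepsilon>0$, to produce an unbounded connected set $B\subseteq[0,+\infty)\times C^1_T(X)$ of nontrivial $T$-forced pairs of the parametrized equation \eqref{equ-motion-le}, whose closure meets the trivial $T$-forced pairs and, crucially, which is unbounded with respect to the parameter $\lambda$.

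The second step is an intermediate-value argument on the $\lambda$-projection $\pi_\lambda(B)\subseteq[0,+\infty)$. Since $B$ is connected, so is $\pi_\lambda(B)$, which must therefore be an interval $I$. The fact that the closure of $B$ meets $\{0\}\times X$ forces $\inf I = 0$, while unboundedness in $\lambda$ forces $\sup I = +\infty$. Hence $I\supseteq(0,+\infty)$, and in particular $1\in I$. Picking any pair $(1,x)\in B$ and specialising \eqref{equ-motion-le} at $\lambda=1$ recovers \eqref{equ-motion-epsilon}, so $x$ is the desired $T$-periodic solution.

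I do not expect any genuine obstacle: Theorem \ref{teo-rami2ord} has already done the hard work, and what remains is just the short observation about the range of the $\lambda$-projection of a connected set. The essential role of the hypothesis $\varepsilon>0$ appears precisely in the last sentence of Theorem \ref{teo-rami2ord}, which ensures that the branch is unbounded in $\lambda$; without it the branch could in principle lie in a bounded $\lambda$-slab and miss the fibre $\lambda=1$ altogether, so the same deduction would not go through in the frictionless case.
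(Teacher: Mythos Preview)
Your proposal is correct and is precisely the straightforward deduction the paper has in mind: the corollary is stated as an immediate consequence of Theorem~\ref{teo-rami2ord}, and your connectedness/intermediate-value argument on the $\lambda$-projection is exactly the way to extract a solution at $\lambda=1$ from a branch that touches $\lambda=0$ and is unbounded in $\lambda$. Your remark on why the hypothesis $\varepsilon>0$ is essential also matches the paper's discussion.
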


As we already pointed out in the Introduction, when $\varepsilon =0$, 
we do not know whether or not the equation
\begin{equation}
\label{equ-motion-0}
x_\pi''(t) = F (t,x_t)
\end{equation}
has a forced oscillation.
Actually, we proved that \eqref{equ-motion-le} admits an unbounded bifurcating branch, but we cannot assert that such a branch is unbounded with respect to $\lambda$.
As far as we know, the problem of the existence of forced oscillations of \eqref{equ-motion-0} is still open, even in the undelayed situation.
In the particular case of the spherical pendulum, i.e.\ $X = S \sp 2$, the existence of forced oscillations for equations without delay has been proved by the last two authors in \cite{FuPe91}, and this result has been extended in \cite{FuPe93o} to the case $X = S\sp{2n}$.


\bigskip
\noindent
\small{{\tt
Pierluigi Benevieri, Massimo Furi, and Maria Patrizia Pera,
\\
Dipartimento di Matematica Applicata ``Giovanni Sansone''
\\
Universit\`a degli Studi di Firenze
\\
Via S. Marta 3
\\
I-50139 Firenze, Italy

\medskip
\noindent
Alessandro Calamai
\\
Dipartimento di Scienze Matematiche\\
Universit\`a Politecnica delle Marche\\
Via Brecce Bianche
\\
I-60131 Ancona, Italy.

\medskip
\noindent
e-mail addresses:
\\
pierluigi.benevieri@unifi.it
\\
calamai@dipmat.univpm.it
\\
massimo.furi@unifi.it
\\
mpatrizia.pera@unifi.it
}}

\end{document}